\def\draft{n}
\theoremstyle{plain}
\newtheorem{theorem}{Theorem}
\newtheorem{proposition}{Proposition}[section]
\newtheorem{corollary}[proposition]{Corollary}
\newtheorem{conjecture}{Conjecture}
\theoremstyle{definition}
\newtheorem{definition}[proposition]{Definition}
\newtheorem{problem}{Problem}
\theoremstyle{remark}
\newtheorem{example}[proposition]{Example}
\newtheorem{remark}[proposition]{Remark}
\def\printname#1{
        \if\draft y
                \smash{\makebox[0pt]{\hspace{-0.5in}
                        \raisebox{8pt}{\tt\tiny #1}}}
        \fi
}
\newcommand{\psdraw}[2]
         {\begin{array}{c} \hspace{-1.3mm}
        \raisebox{-4pt}{\epsfig{figure=draws/#1.eps,width=#2}}
        \hspace{-1.9mm}\end{array}}
\newlength{\standardunitlength}
\long\def\@makecaption#1#2{%
     \vskip 10pt

\setbox\@tempboxa\hbox{
       \small\sf{\bfcaptionfont #1. }\ignorespaces #2}%
     \ifdim \wd\@tempboxa >\captionwidth {%
         \rightskip=\@captionmargin\leftskip=\@captionmargin
         \unhbox\@tempboxa\par}%
       \else
         \hbox to\hsize{\hfil\box\@tempboxa\hfil}%
     \fi}
\font\bfcaptionfont=cmssbx10 scaled \magstephalf
\newdimen\@captionmargin\@captionmargin=2\parindent
\newdimen\captionwidth\captionwidth=\hsize
\newcommand{\tr}{\operatorname{tr}}
\def\lbl#1{\label{#1}\printname{#1}}
\def\BN{\mathbb N}
\def\BZ{\mathbb Z}
\def\BQ{\mathbb Q}
\def\BR{\mathbb R}
\def\BC{\mathbb C}
\def\D{\Delta}
\def\calP{\mathcal P}
\def\a{\alpha}
\def\La{\Lambda}
\def\l{\lambda}
\def\S{\Sigma}
\def\w{\omega}
\def\e{\epsilon}
\def\b{\beta}
\def\longto{\longrightarrow}
\def\w{\omega}
\def\SL{\mathrm{SL}}
\def\Om{\Omega}
\def\calB{\mathcal{B}}
\def\CS{\mathrm{CS}}
\def\Vol{\mathrm{Vol}}
\def\w{\omega}
\def\Om{\Omega}
\def\Li{\mathrm{Li}}
\def\calB{\mathcal{B}}
\def\calA{\mathcal{A}}
\def\fg{\mathfrak{g}}
\def\CS{\mathrm{CS}}
\def\Hom{\mathrm{Hom}}
\def\Lnp{L^{\mathrm{np}}}
\def\Lp{L^{\mathrm{p}}}
\def\SL{\mathrm{SL}}
\def\SU{\mathrm{SU}}
\def\calK{\mathcal{K}}
\def\ZLMO{Z^{\mathrm{LMO}}}
\def\ft{\mathrm{ft}}
\def\ARES{\mathrm{ARES}}
\def\RES{\mathrm{RES}}
\def\bterm{balanced term}
\def\gterm{general term}
\def\ft{\mathfrak{t}}
\def\GM{\mathrm{GM}}
\def\eLambda{e\Lambda}
\def\SP{\Sigma\Pi}
\begin{document}


\title[Chern-Simons theory, analytic continuation and arithmetic]{Chern-Simons 
theory, analytic continuation and arithmetic}
\author{Stavros Garoufalidis}
\address{School of Mathematics \\
         Georgia Institute of Technology \\
         Atlanta, GA 30332-0160, USA \\ 
         {\tt http://www.math.gatech} \newline {\tt .edu/$\sim$stavros } }
\email{stavros@math.gatech.edu}

\thanks{The author was supported in part by NSF. \\
\newline
1991 {\em Mathematics Classification.} Primary 57N10. Secondary 57M25.
\newline
{\em Key words and phrases: Chern-Simons theory, analytic continuation,
resurgence, arithmetic resurgence, quasi-unipotent monodromy, 
Gevrey series of mixed type, quasi-unipotent monodromy, 
Quantum Field Theory, TQFT, knots, 3-manifolds, Habiro ring, $q$-factorials,
Rogers dilogarithm, Witten's conjecture,
Volume Conjecture, asymptotic expansions, periods, Riemann-Hilbert problem,
$G$-functions.
}
}

\date{October 27, 2008 }


\begin{abstract}
The purpose of the paper is to introduce some conjectures regarding the
analytic continuation and the arithmetic properties of quantum invariants
of knotted objects. More precisely, we package the perturbative and
nonperturbative invariants of knots and 3-manifolds into two power series
of type P and NP, convergent in a neighborhood of zero, 
and we postulate their arithmetic resurgence.
By the latter term, we mean analytic continuation as a 
multivalued analytic function in the complex numbers minus a discrete set of 
points, with restricted singularities, local and global monodromy. 
We point out some key features of arithmetic resurgence in connection to 
various problems of asymptotic expansions of exact and perturbative 
Chern-Simons theory with compact or complex gauge group.
Finally, we discuss theoretical and experimental evidence for our conjecture.
\end{abstract}

\maketitle

\tableofcontents

\section{Introduction}
\lbl{sec.intro}


\subsection{Chern-Simons theory and analytic continuation}
\lbl{sub.analyticCS}

Chern-Simons Quantum Field Theory in 3-dimensions (perturbative, or
non-perturbative) produces a plethora of numerical invariants of knotted
3-dimensional objects. We introduce a packaging of these invariants into
two power series: one that encodes non-perturbative invariants (model
NP), and one that encodes perturbative invariants (model P).  
The paper is concerned with the analytic continuation, the asymptotic behavior
and and arithmetic properties of those power series.

Let us begin with a conjecture concerning the structure of nonperturbative
quantum invariants. Consider the generating series

\begin{equation}
\lbl{eq.Fzz}
\Lnp_{M,G}(z)=\sum_{n=0}^\infty Z_{M,G,n} z^n
\end{equation}
of the {\em Witten-Reshetikhin-Turaev} invariants $Z_{M,G,n}$ (see Section
\ref{sec.cs}) of a closed, oriented, connected 3-manifold $M$, 
using a compact Lie group $G$ and a level 
$n \in \BN$. 
The power series \eqref{eq.Fzz} is known to be convergent inside the unit 
disk $|z|<1$ since unitarity implies that $Z_{M,G,n}$ grows at most
polynomially with respect to $n$; see \cite{Ga1}.

\begin{conjecture}
\lbl{conj.0}(Analytic Continuation)
For every pair $(M,G)$ as above, the series $\Lnp_{M,G}(z)$ has analytic 
continuation as a multivalued function on $\BC\setminus\eLambda_{M,G}$,
where $\eLambda_{M,G} \subset \BC$ is a finite set that contains zero and
the exponentials of
the negative of the critical values of the complexified Chern-Simons action.
\end{conjecture}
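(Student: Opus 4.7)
The plan is to combine the stationary-phase expansion of $Z_{M,G,n}$ around complexified flat connections with a resurgent Borel--Laplace resummation, and then identify the singular set of the resulting function with $\eLambda_{M,G}$.

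First, I would decompose the conjectured asymptotics of $Z_{M,G,n}$ by critical point. The refined Witten / Ohtsuki / Gukov-type conjectures predict
\begin{equation}
Z_{M,G,n} \sim \sum_{\rho} e^{-n\,\CS(\rho)}\, n^{d_\rho}\, \phi_\rho(1/n), \qquad \phi_\rho(x)\in\BC[[x]],
\end{equation}
where $\rho$ ranges over the critical points of the complexified Chern-Simons functional on the $G_\BC$-character variety of $M$. Since the character variety is a complex algebraic variety of finite type and $\CS$ is a regular function on it, the image of the critical-value map is finite; exponentiating its negative gives a finite candidate for $\eLambda_{M,G}$, which is where we expect the singularities of $\Lnp_{M,G}$ to sit.

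Second, I would resum. Assuming each $\phi_\rho$ is Borel-summable in the relevant direction, the Laplace integral produces a sector-holomorphic $Z_\rho(n)$ approximating the $\rho$-contribution to $Z_{M,G,n}$ up to exponentially smaller terms. A standard Borel--Laplace manipulation then converts $\sum_n Z_\rho(n)\, z^n$ into a function holomorphic on $\BC$ minus a ray through $e^{-\CS(\rho)}$, with branch point exactly at $e^{-\CS(\rho)}$. Summing over $\rho$ yields the analytic continuation of $\Lnp_{M,G}$ to $\BC \setminus \eLambda_{M,G}$, and the local monodromy around each singularity is controlled by the alien derivatives (Stokes constants) linking $\phi_\rho$ to $\phi_{\rho'}$. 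For integer homology 3-spheres with $G=\SU(2)$ I would in parallel invoke Habiro's theorem lifting $Z_{M,G,n}$ to an element of $\habiro$, which supplies integrality and coefficient control and connects the argument to the $G$-function / arithmetic-resurgence framework of the paper, ultimately pinning down global monodromy.

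The hard part will be the resurgence input: establishing that every $\phi_\rho$ admits a Borel transform that extends as a multivalued analytic function on $\BC$ whose singularities lie in $\{\CS(\rho')-\CS(\rho)\}$ and whose Stokes constants do not accumulate. This is presently known only in sporadic examples (notably the figure-$8$ knot complement and certain Seifert fibered spaces), and the general case almost certainly requires the arithmetic input -- quasi-unipotent monodromy, $G$-functions, Habiro-type integrality -- that the rest of the paper is organized around; purely analytic techniques seem insufficient to rule out accumulation of singularities or of Stokes data, or to guarantee that the multivalued extension is defined on the complement of a genuinely \emph{finite} set.
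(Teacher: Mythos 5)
You are attempting to prove something the paper does not prove: Conjecture~\ref{conj.0} is stated \emph{as a conjecture} and left open. The paper itself only (i) records that it has been established for power series of one-dimensional sum-product type, which covers $3_1$, $4_1$, and $(2,p)$ torus knots (citing the Costin--Garoufalidis and \'Ecalle--Sharma work), (ii) verifies it directly for $S^3$ and for $S^1 \times \Sigma_g$ where $\Lnp$ is an elementary function, and (iii) supplies numerical evidence for twist knots. There is no general argument in the paper, and your proposal does not supply one either; it is, as you yourself say in your last paragraph, a program with the central step missing.

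Beyond the acknowledged gap, there is a circularity worth flagging. Your input is the refined Witten/Ohtsuki/Gukov-type all-orders expansion
\[
Z_{M,G,n} \sim \sum_{\rho} e^{-n\,\CS(\rho)}\, n^{d_\rho}\, \phi_\rho(1/n),
\]
but the paper explicitly notes (feature (b) of the Introduction) that Conjecture~\ref{conj.0} \emph{implies} the Witten Conjecture and the Volume Conjecture by a Cauchy-integral contour-deformation argument. So the asymptotic expansion you take as a hypothesis is, modulo Borel summability, essentially a consequence of the statement you want to prove; in the general case it is not independently known, and assuming it puts you in a near-tautological position. What would genuinely prove Conjecture~\ref{conj.0} is precisely the resurgence of the Borel transforms $\calB\phi_\rho$ with controlled singular locus and no accumulation of Stokes data --- which is exactly what your step two assumes and your final paragraph concedes is open. (Two smaller normalization points: the paper's singular set is $\exp(-\CS_{\BC}(\rho)/(2\pi i))$, not $e^{-\CS(\rho)}$, matching the path-integral weight $e^{\frac{n}{2\pi i}\CS(A)}$ from the Appendix; and the ``ray'' picture needs care, since after the full analytic continuation the function is multivalued on $\BC\setminus\eLambda_{M,G}$, not merely on a slit plane.) Your sketch does agree in spirit with the paper's heuristic --- compare feature (c), that perturbation theory is the expansion of the multivalued $\Lnp_{M,G}$ around one of its singularities, and the Riemann--Hilbert-type path-integral identity in Section~\ref{sec.pathintegral} --- so you have correctly reconstructed the intended mechanism; you just should not present it as a proof, and the paper does not.
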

A key observation is that 
$\eLambda_{M,G}$ may contain elements inside the unit disk $|z| <1$ despite
the fact that the power series $\Lnp_{M,G}(z)$ is analytic for $z$ such
that $|z| <1$. One may compare this behavior 
with the power series $\sum_{n=1}^\infty z^n/n^2$ that define the 
classical dilogarithm, whose analytic continuation is a multivalued
analytic function in $\BC\setminus\{0,1\}$.
Schematically, the analytic continuation of $\Lnp_{M,G}(z)$
may be depicted as follows:

$$
\psdraw{continuation}{2in}
$$

The above conjecture has the following features:

\begin{itemize}
\item[(a)]
It can be formulated for pairs $(\calK,G)$ where
$\calK$ denotes a knotted object, i.e., a knot $K$ in 3-space or
a closed 3-manifold $M$ and $G$ denotes a compact Lie group. 
\item[(b)]
It implies via elementary complex analysis two well-known Asymptotic 
Conjectures in Quantum Topology;
namely the Volume Conjecture (in the case of knots), and the Witten
Conjecture (in the case of 3-manifolds). The complex analysis argument
uses the Cauchy formula to write $Z_{M,G,n}$ as a contour integral 
of $\Lnp_{M,B}(z)/z^{n+1}$ and then deform the contour around the
singularities of the integrand nearest to the origin. For a detailed 
discussion, see \cite[Sec.7]{CG1}.
\item[(c)] 
It states a precise relation between exact Chern-Simons theory
and its perturbation expansion around a trivial (or not) flat connection.
Namely, perturbation theory is simply the expansion of
the multivalued function $\Lnp_{M,G}(z)$ around one of its singularities.
\item[(d)] 
It explains the effect of complexifying a compact gauge group
and to the partition function of the corresponding gauge theory.
Indeed, analytic continuation captures the critical values 
of the complexified action; compare also with \cite{GM,Vo}.
\item[(e)]
The Conjecture can be extended to state-sum invariants of sum-product type
that generalize $Z_{M,G,n}$ and do not necessarily come from topology.
\item[(f)]
The Conjecture has been proven for power series of 1-dimensional sum-product
type, which includes the case of the $3_1$ and $4_1$ knots; see \cite{ES}
and \cite{CG2}.
\item[(g)]
The Conjecture can, and has been, numerically tested. See Section 
\ref{sec.evidence}.
\end{itemize}

\subsection{Chern-Simons theory and Symmetry}
\lbl{sub.symmetry}

Our next conjecture is a Symmetry Conjecture. Recall that $M$ denotes an
oriented 3-manifold; we let $\tau M$ denote the {\em orientation reversed} 
manifold. 

\begin{conjecture}
\lbl{conj.symmetry}(Symmetry)
For every pair $(M,G)$ with $M$ an integer homology sphere, we have:
\begin{equation}
\lbl{eq.symmetry}
L_{M,G}(z):=\Lnp_{\tau M,G}(z)-\Lnp_{M,G}(1/z)
\end{equation}
has singularities at $z=0,1,\infty$.
\end{conjecture}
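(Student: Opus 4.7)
The plan is to exploit the behavior of the complexified Chern-Simons action under orientation reversal. Since $\CS_{\tau M}(\tau\alpha) = -\CS_M(\alpha)$ for any flat $G_\BC$-connection $\alpha$, the exponentials of the negative critical values get inverted, so Conjecture \ref{conj.0} predicts a bijection
\[ \eLambda_{\tau M, G} \setminus \{0, \infty\} \;\longleftrightarrow\; \bigl\{ 1/\lambda : \lambda \in \eLambda_{M, G} \setminus \{0, \infty\} \bigr\}. \]
Thus both terms in \eqref{eq.symmetry} share the same singular set in $\BC \setminus \{0, \infty\}$. The trivial connection has $\CS=0$, hence contributes the fixed point $\lambda=1$ of the involution $\lambda \mapsto 1/\lambda$; this fixed point is forced to remain a singularity of $L_{M,G}$, and together with the boundary points $0$ and $\infty$ (unavoidable under the substitution $z\mapsto 1/z$) it exhausts the singular set allowed by the conjecture.

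The core step is to match the singular germs on each side at every non-trivial $\lambda \in \eLambda_{\tau M, G} \setminus \{0,1,\infty\}$. At the formal level, the orientation-reversal symmetry for integer homology spheres is the involution $q \mapsto q^{-1}$ on the unified WRT invariant $J_M(q) \in \habiro$, reflecting the Hermiticity $Z_{\tau M, G, n} = \overline{Z_{M, G, n}}$ at roots of unity. To upgrade this to the analytic level, I would invoke Conjecture \ref{conj.0} to identify the singular expansion of $\Lnp_{M,G}$ at $e^{-\CS_\alpha}$ with the perturbative Chern-Simons series around the flat connection $\alpha$. Because this series is related to its counterpart around $\tau\alpha$ on $\tau M$ by the combined effect of complex conjugation of the one-loop and higher-loop data together with the sign flip $\CS \mapsto -\CS$, and because $z\mapsto 1/z$ maps the singular point at $e^{-\CS_\alpha}$ to the one at $e^{-\CS_{\tau\alpha}}$, the singular parts should cancel in the difference $L_{M,G}(z)$.

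The main obstacle is to make this cancellation rigorous. The perturbative expansions around each flat connection are divergent transseries, and their singular germs are only determined after Borel resummation along specified directions; establishing an equality of germs at $\lambda$ requires controlling the full resurgent structure -- Stokes constants and alien derivatives together -- rather than just leading asymptotic terms. I expect the arithmetic resurgence hypothesis introduced in the paper to be the essential input here, since it constrains the global monodromy strongly enough to enforce cancellation term by term rather than only up to regular error. A subsidiary difficulty is that the trivial flat connection contributes a Gevrey series of type P whose singular behavior must be isolated cleanly at $z=1$, so that it neither leaks into the non-trivial critical values nor spoils the symmetry of the reversed-orientation expansion.
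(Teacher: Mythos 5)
This statement is Conjecture \ref{conj.symmetry} in the paper, and the paper offers no proof; it is posed as an open conjecture accompanied only by the informal commentary in the enumerated remarks (a)--(i) that follow it. The motivation the paper does supply is remark (f) --- that $\CS$ changes sign under orientation reversal and that the formal path-integral representation of $\Lnp_{M,G}(z)$ in Appendix \ref{sec.pathintegral} then ``formally implies'' the symmetry --- together with Remark \ref{rem.eLambda}, which records precisely the involution $\eLambda_{\tau M,G}=\tau\eLambda_{M,G}$, $\tau(\lambda)=1/\lambda$, that you invoke. Your additional ingredients (the $q\mapsto q^{-1}$ involution on the unified invariant in the Habiro ring for integer homology spheres, and the identification of singular germs with perturbative expansions around flat connections via Conjecture \ref{conj.0}) parallel the discussions in Section \ref{sec.habiro} and Section \ref{sub.analyticCS}. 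So your sketch is a faithful rendering of the paper's own intuition for why the conjecture ought to hold.

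It is not, however, a proof, and you say so yourself. The obstacle you name --- matching full resurgent germs (Stokes constants, alien derivatives, local monodromy) at each non-trivial $\lambda\in\eLambda_{M,G}$, not merely leading asymptotics --- is exactly what Conjecture \ref{conj.1} postulates and what no one has established for general $(M,G)$; invoking it here makes the argument circular relative to the paper's own conjectural framework rather than wrong. One further point worth tightening if you pursue this: you assert that the fixed point $\lambda=1$ coming from the trivial connection is ``forced to remain a singularity'' of $L_{M,G}(z)$, but the orientation-reversal/cancellation heuristic you give does not by itself establish that the trivial-connection contributions fail to cancel. The conjecture posits a residual singularity at $z=1$ (and indeed remark (g) leaves open that the normalized difference could vanish identically, i.e.\ have no singularities at all), but nothing in your sketch decides this either way.
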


Let us make some comments regarding the above {\em paradoxical statement}:

\begin{itemize}
\item[(a)]
the left (resp. right) hand side is given by a convergent power series for 
$|z|<1$ (resp. $|z|>1$). Thus, the power series never
make sense simultaneously, but their analytic continuations do.
\item[(b)]
Zagier calls a similar statement in \cite[Eqn.7]{Za1} a 
{\em strange identity} since the the two sides never make sense 
simultaneously. Our Symmetry Conjecture is closely related
to a {\em modular property},
at least for the series studied by Kontsevich-Zagier; see \cite[Sec.6]{Za1}.
\item[(c)]
In physics, Equation \eqref{eq.symmetry} is usually called a {\em duality}.
\item[(d)]
In algebraic geometry and number theory, one may compare \eqref{eq.symmetry} 
with the following symmetry for the polylogarithm:
\begin{equation}
\lbl{eq.symmetrypoly}
\Li_k(z)+(-1)^k \Li_k(1/z)=-\frac{(2\pi i)^k}{k!} 
B_k\left(\frac{\log(z)}{2\pi i}\right)
\end{equation}
where $\Li_k(z)=\sum_{n=1}^\infty z^n/n^k$ is the $k$-th polylogarithm
and $B_k(z)$ is the $k$-th Bernoulli polynomial; see \cite[Sec.1.3]{Oe}.
\item[(e)]   
In analysis one may use the above symmetry to deduce the asymptotic behavior 
(and even more, the asymptotic expansion) of $\Lnp_{M,G}(z)$ for large $|z|$.
In particular, if $L_{M,G}(z)=0$, it follows that
\begin{equation}
\lbl{eq.largez}
\Lnp_{M,G}(z)=1+O\left(\frac{1}{z}\right)
\end{equation}
for large $|z|$.
\item[(f)]
The above symmetry may be explained by the fact that CS changes sign under
   orientation reversal. Since the level is nonnegative, the path integral
   formula for $\Lnp_{M,G}(z)$ formally implies the above symmetry.
\item[(g)]
If we use the normalized invariants
\begin{equation}
\lbl{eq.Fzznorm}
\hat\Lnp_{M,G}(z)=\sum_{n=0}^\infty \hat{Z}_{M,G,n} z^n, \qquad
\hat{Z}_{M,G,n}= \frac{Z_{M,G,n}}{Z_{S^3,G,n}}
\end{equation}
and if $M$ is an integer homology sphere, then it is possible that
\begin{equation}
\lbl{eq.possible}
\hat\Lnp_{\tau M,G}(z)-\hat\Lnp_{M,G}(1/z)=0.
\end{equation}
\item[(h)]
When $L_{M,G}(z)=0$, it follows that the asymptotic expansion of $\Lnp_{M,G}$ 
around its 
   singularities {\em uniquely determines} $\Lnp_{M,G}$. Indeed, the difference
   between two determinations is an entire function which is bounded by a 
   constant by the Symmetry Conjecture. Thus, the difference is identically 
   zero.
\item[(i)]
If $M=M$ is {\em amphicheiral} (for example, $M$ is given by a 
connected sum $M=N \# \tau N $ where $\tau$ is the orientation reversing
involution), then Equation \eqref{eq.possible} 
predicts that $\hat\Lnp_{M,G}(z)=\hat\Lnp_{\tau M,G}(1/z)$.
\end{itemize}

\subsection{Chern-Simons theory and P versus NP}
\lbl{sub.PNP}

So far, we considered nonperturbative quantum invariants. Let us now consider
perturbative quantum invariants of pairs $(\calK,G)$. They can be packaged
into a power series $\Lp_{M,G}(z)$ which is convergent at $z=0$. For 
a detailed definition, see Section \ref{sec.cs}.

Our next conjecture describes an explicit relation between 
the perturbative and nonperturbative quantum invariants.

\begin{conjecture}
\lbl{conj.2}(Exact Implies Perturbative)
For every integer homology sphere $M$
we have:
\begin{equation}
\lbl{eq.delta0}
\hat\Lnp_{M,G}(1+z)=\log(z) \Lp_{M,G}(\log(1+z))+h(z) 
\end{equation}
where $h(z)$ is a holomorphic function of $z$ at $z=0$ and 
$\hat\Lnp_{M,G}(1+z)$ denotes the analytic continuation at $1+z$ along {\em
any} path that avoids the singularities.
\end{conjecture}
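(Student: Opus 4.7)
The plan is to combine the analytic continuation provided by Conjecture \ref{conj.0}, a local resurgence analysis near the singularity at $z=1$, and the defining property of perturbative Chern-Simons theory as the asymptotic expansion of the WRT invariants at the trivial flat connection.

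First I would localize at $z=1$. Since $M$ is an integer homology sphere, the trivial flat connection exists and has vanishing Chern-Simons critical value, so by Conjecture \ref{conj.0} the point $1=e^{-0}$ belongs to $\eLambda_{M,G}$. Setting $w=z-1$, the goal is to establish the local form
\begin{equation*}
\hat\Lnp_{M,G}(1+w)=\log(w)\,\phi(w)+h(w),
\end{equation*}
with $\phi,h$ holomorphic at $w=0$; the factor $\phi(w)$ is then recovered as $(2\pi i)^{-1}$ times the monodromy discontinuity of $\hat\Lnp_{M,G}(1+w)$ around $w=0$. One must argue that this is indeed the local shape, which amounts to the ``arithmetic resurgence'' refinement of Conjecture \ref{conj.0} asserting that the local monodromy at $z=1$ is unipotent of rank two.

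Next I would match $\phi$ to the perturbative series. Applying Cauchy's formula,
\begin{equation*}
\hat{Z}_{M,G,n}=\frac{1}{2\pi i}\oint\hat\Lnp_{M,G}(z)\,\frac{dz}{z^{n+1}},
\end{equation*}
and deforming the contour past the unit circle (as in \cite[Sec.7]{CG1}), the dominant contribution as $n\to\infty$ is a Hankel-type integral around the log cut at $z=1$. After the substitution $z=1+w$ and $1/z^{n+1}=e^{-(n+1)\log(1+w)}$, Watson's lemma converts this into a $1/n$-expansion whose coefficients are the Taylor coefficients of $\phi(w)$ read in the variable $\hbar=\log(1+w)$. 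By the very definition of perturbative Chern-Simons theory (Section \ref{sec.cs}), the same $1/n$-expansion is precisely the formal series $\Lp_{M,G}(\hbar)$ in the quantum variable $\hbar\sim\log q$. A term-by-term comparison then forces $\phi(w)=\Lp_{M,G}(\log(1+w))$, with the holomorphic remainder absorbed into $h(w)$, which is exactly Equation \eqref{eq.delta0}.

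The main obstacle lies in the first step: Conjecture \ref{conj.0} by itself only asserts multivaluedness on $\BC\setminus\eLambda_{M,G}$ and does not constrain the singularity type, so ruling out iterated logarithms, algebraic branch points, or essential singularities at $z=1$ is genuinely stronger input than Conjecture \ref{conj.0} alone and is where most of the real work lies. A secondary subtlety is that Step 2 produces an equality of asymptotic expansions; promoting it to an equality of convergent holomorphic germs requires a Borel-summability statement, which in this setting can be harvested from the boundedness argument of item (h) in Section \ref{sub.symmetry} via the Symmetry Conjecture \ref{conj.symmetry}.
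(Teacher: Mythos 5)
This statement is a conjecture, and the paper does not (and does not claim to) prove it. The closest thing the paper offers is Theorem~\ref{thm.asF} in Appendix~A, which gives a formal derivation of exactly the relation of Equation~\eqref{eq.delta0} in the special case of series of sum-product type, so a comparison with your proposal is against that.

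The appendix argument runs ``forward'': starting from the explicit sum-product formula for $a_n$, it uses $F(0)=0$ to truncate to an asymptotic expansion $a_n = \sum_{k=1}^{N} c_k/n^k + O(n^{-N-1})$ where $c_k$ are the coefficients of $\SP(x)$, resums to $\Lnp(z)\sim\sum_k c_k\Li_k(z)$, and then imports the known local structure $\Li_k(z)=\log(z-1)\log(z)^{k-1}/(k-1)!+(\text{holomorphic})$ at $z=1$. The resurgence at $z=1$ is never assumed; it is \emph{computed} from the explicit closed form via classical polylogarithms. Your proposal runs ``backward'': you posit the Nilsson-class local form $\log(w)\phi(w)+h(w)$ at $z=1$ as input, then recover the asymptotics of $\hat{Z}_{M,G,n}$ via Cauchy's formula and Watson's lemma, and match against the defining asymptotic series of $\Lp_{M,G}$. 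Yours has the advantage of being uniform in $(\calK,G)$ and not tied to any $q$-series or sum-product presentation; the paper's Appendix~A argument has the advantage of being unconditional (for its class of examples) because the monodromy is produced rather than presupposed.

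The gaps you flag are genuine and are exactly the right ones: the local form $\log(w)\phi(w)+h(w)$ with $\phi,h$ holomorphic is part of the Arithmetic Resurgence Conjecture~\ref{conj.1}, not of the Analytic Continuation Conjecture~\ref{conj.0}; and promoting an equality of transasymptotic expansions to an equality of convergent germs requires a Borel-summability input, for which the Symmetry Conjecture~\ref{conj.symmetry}~(h) is a plausible but unproven source. One further technical caution on your Cauchy/Hankel step: $z=1$ lies on the circle of convergence, and $(1+w)^{-n-1}$ grows in half the directions of the $w$-plane, so the direction of the branch cut and the steepest-descent contour need the same care taken in \cite[Sec.7]{CG1}; the appendix argument sidesteps this entirely by substituting into polylogarithm identities rather than deforming contours. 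With those three inputs granted, your route is sound; without them, it is a heuristic at the same level as the paper's formal appendix calculation.
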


As before, we can extend Conjecture \ref{conj.2} to pairs $(K,\SU(2))$,
where $K$ is a knot in 3-space.

\begin{remark}
\lbl{rem.alien}
In \'Ecalle's terminology (see \cite{Ec1} and also 
\cite[Sec.2.3]{Sa}), if $\D_z$ denotes the {\em alien derivative}
in the direction $z$, Conjecture \ref{conj.2} states that:
\begin{eqnarray}
\lbl{eq.delta1}
\Delta_1\hat\Lnp_{M,G}(z) &=& \Lp_{M,G}(\log(1+z)) \\
\lbl{eq.delta2}
\Delta_{1-z}\Delta_z\hat\Lnp_{M,G}(z) &=& 
0, \qquad \text{for} \qquad z \in \BC\setminus\{0,1\}.
\end{eqnarray}
Equation \eqref{eq.delta2} is reminiscent of the condition $z \wedge (1-z)
\in \wedge^2(\BC^*)$ that defines the {\em Bloch group}; 
see for example \cite{Ga4}.
\end{remark}

\begin{remark}
\lbl{rem.Lpreverse}
The series $\Lp_{\calK,G}(z)$ also satisfies a Symmetry Property:
\begin{equation}
\lbl{eq.symmetryp}
\Lp_{\tau\calK,G}(z)=\Lp_{\calK,G}(-z).
\end{equation}
Unlike the case of Conjecture \ref{conj.symmetry}, this is an easy corollary 
of its very definition.
\end{remark}

\begin{remark}
\lbl{rem.gukov}
Chern-Simons theory with complex gauge group was studied extensively by
Gukov in \cite{Gu}. It is an interesting problem to compare forthcoming 
work of Gukov-Zagier on modularity properties of the quantum invariants
with our conjectures.   
\end{remark}

\subsection{Chern-Simons theory and arithmetic resurgence}
\lbl{sub.CSari}

Based on some partial results of \cite{ES} and \cite{CG2} and 
stimulating conversations with O. Costin, J. \'Ecalle and D. Zagier, more is
actually expected to be true. Namely, we expect arithmetic restrictions
on the singularities of the series $\Lnp_{M,G}(z)$ and of its monodromy, 
local and global. These restrictions lead us naturally to the notion of 
arithmetic resurgence, and the Gevrey series of mixed type. 
In the rest of the paper, we will formulate these
expected algebraic/arithmetic aspects of quantum invariants in a precise way
and to expose the reader to the wonderful world of resurgence, introduced
by \'Ecalle in the eighties for unrelated reasons; \cite{Ec1}.

The logical dependence of the sections is the following:

$$
\divide\dgARROWLENGTH by2
\begin{diagram}
\node[2]{\text{Section \ref{sec.cs}}}
\arrow{sw}\arrow{se}
\node[2]{\text{Section \ref{sec.ar}}}
\arrow{sw}\arrow{se} 
\\
\node{\text{Section \ref{sec.habiro}}}
\arrow{see}
\node[2]{\text{Section \ref{sec.conj}}}
\arrow{s}
\node[2]{\text{Section \ref{sec.SP}}}
\arrow{sww}
\\
\node[3]{\text{Section \ref{sec.evidence}}}
\end{diagram}
$$

\section{Chern-Simons theory and invariants of knotted objects}
\lbl{sec.cs}


\subsection{Model NP: Non-perturbative invariants of 3-manifolds}
\lbl{sub.npinvariants}

In this section $M$ will denote a closed 3-manifold and $G$ will denote
a simple, compact, simply connected group $G$. For example, $G=\SU(2)$.

The Witten-Reshetikhin-Turaev invariant is a map:

\begin{equation}
\lbl{eq.WRT}
Z_{M,G}: \BN \longto \BC.
\end{equation}
For a definition of $Z_{M,G,n}$ see \cite{RT,Tu2,Wi}.
Formally, for $n \in \BN$, $Z_{M,G,n}$ is the expectation value of a
{\em path integral} with a topological {\em Chern-Simons Lagrangian} 
at level $n$; see \cite{Wi}.
Since the Chern-Simons Lagrangian takes values in $\BR/\BZ$,
it follows that the level $n$ has to be an integer number, which without loss
we take it to be nonnegative.
We can convert the sequence $(Z_{M,G,n})$ into a generating series as follows:

\begin{definition}
\lbl{def.Lnp}
For every $M$ and $G$ as above, we define:
\begin{equation}
\lbl{eq.Fz}
\Lnp_{M,G}(z)=\sum_{n=0}^\infty Z_{M,G,n} z^n
\end{equation}
\end{definition}

Unitarity of the Chern-Simons theory implies that for every $M,G$ the sequence
$(Z_{M,G,n})$ grows polynomially with respect to $n$. In other words, it
was shown in \cite{Ga1} that there exists positive constant $C$ and 
$m \in \BN$ (that depend on $M$ and $G$) so that
$$
|Z_{M,G,n}| < C n^m
$$
for all $n \in \BN$. Thus, $\Lnp_{M,G}(z)$ is analytic inside the unit disk
$|z| < 1$.

\subsection{Model P: Perturbative invariants of 3-manifolds}
\lbl{sub.pinvariants}

The path integral interpretation of $Z_{M,G,n}$ formally leads to a 
perturbation
theory along a distinguished critical point of the Chern-Simons action,
namely the trivial flat connection. This gives rise to a graph-valued
power series invariant, which has been defined by Le-Murakami-Ohtsuki in 
\cite{LMO}. Additional definitions of this powerful invariant were given
by Kuperberg-Thurston; see \cite{KT}.
More precisely, LMO define a graph-valued invariant $\ZLMO_M \in 
\calA(\emptyset)$ where $\calA(\emptyset)$ is a completed vector space of 
Jacobi diagrams. A {\em Jacobi} diagram of degree $n$ is a trivalent graph 
with $2n$ oriented vertices, considered modulo the AS and IHX relations; 
see \cite{B-N}. Jacobi diagrams are diagrammatic
analogues of tensors on a Lie algebra with an invariant inner product.
Indeed, given a simple Lie algebra $\fg$, there is a {\em weight system map}
that replaces a Jacobi diagram of degree $n$ by a rational number times 
$x^{-n}$: 
$$
W_{\fg}: \calA(\emptyset) \longto \BQ[[1/x]]
$$
see \cite{B-N}. Recall the {\em Borel transform}:
\begin{equation}
\lbl{eq.borel}
\calB: \BQ[[1/x]] \longto \BQ[[z]], \qquad
\calB\left(\sum_{n=0}^\infty \frac{a_n}{x^n} \right)
=\sum_{n=0}^\infty \frac{a_{n+1}}{n!} z^n
\end{equation}

\begin{definition}
\lbl{def.Lp}
Let $\Lp_{M,\fg}(z)$ denote the Borel transform of $W_{\fg} \circ \ZLMO_M$.
\end{definition} 

In \cite{GL2} it was proven that if $M$ is a homology sphere and $\fg$ is a 
simple 
Lie algebra, then the formal power series $W_{\fg} \circ \ZLMO_M$ is Gevrey-1.
In other words, $\Lp_{M,\fg}(z)$ is an analytic function in a neighborhood 
of $z=0$.

\subsection{The critical values of the Chern-Simons action and the dilogarithm}
\lbl{sub.geometricM}

Our main Resurgence Conjecture \ref{conj.1} formulated in Section 
\ref{sec.conj} below links the singularities of the analytic continuation of 
the series $\Lnp_{\calK,G}(z)$
and $\Lp_{\calK,G}(z)$ to some classical geometric invariants of 3-manifolds,
namely the critical values of the complexified Chern-Simons function. Let
us recall those briefly, and refer the reader to \cite{GZ,Wi,Ne,Ga4} for a 
more detailed discussion.

Let us fix a closed 3-manifold $M$, and simple, compact simply connected group
$G$, and a trivial bundle $M \times G$ with the trivial connection $d$.
Let $\calA$ denote the set of $G$-connections on $M \times G$. There is a 
Chern-Simons map:

\begin{equation}
\lbl{eq.CSA}
\CS: \calA \longto \BR/\BZ(2)
\end{equation}
where, as common in algebraic geometry, we denote

\begin{equation}
\lbl{eq.BZn}
\BZ(n)=(2 \pi i)^n\BZ.
\end{equation}
Even though $\calA$ is an affine infinite dimensional vector space acted
on by an infinite dimensional gauge group, the set $X_G(M)$ of gauge 
equivalence classes of the critical points of $\CS$ is a compact semialgebraic
set that consists of flat $G$-connections. Up to gauge equivalence,
the latter are determined by
their monodromy. In other words, we may identify:

\begin{equation}
\lbl{eq.XG}
X_G(M)=\Hom(\pi_1(M), G)/G
\end{equation}
This gives rise to a map:

\begin{equation}
\lbl{eq.CS}
\CS: X_G(M) \longto \BR/\BZ(2), \qquad A \mapsto 
\CS(A)=\int_M \tr(A\wedge dA+\frac{2}{3} A \wedge A \wedge A).
\end{equation}
Stokes's theorem implies that $\CS$ is a locally constant map. Since
$X_G(M)$ is a compact set, $\CS$ takes finitely many values in 
$\BR/\BZ(2)$. Let us now {\em complexify} the action; see also \cite{Vo}.
This means that we replace the compact Lie group $G$ by its
complexification $G_{\BC}$, the moduli space $X_G(M)$ by $X_{G_{\BC}}(M)$,
and the Chern-Simons action $\CS$ by $\CS_{\BC}$:

\begin{equation}
\lbl{eq.CSC}
\CS_{\BC}: X_{G_{\BC}}(M) \longto \BC/\BZ(2)
\end{equation}
$\CS_{\BC}$ is again a locally constant map, and takes finitely many values
in $\BC/\BZ(2)$.
Thus, we may define the following geometric invariants of 3-manifolds.

\begin{definition}
\lbl{def.singN}
For $M$ and $G$ as above, we define
\begin{eqnarray}
\lbl{eq.singNp}
\Lambda_{M,G} &=& \cup_{\rho \in X_{G_{\BC}}(M)} 
(-\CS_{\BC}(\rho) + \BZ(2) ) \subset \BC, \qquad \eLambda_{M,G}=\{0\}\cup
\exp\left(\frac{1}{2 \pi i} \Lambda_{M,G} \right) \subset \BC.
\end{eqnarray}
\end{definition}

\begin{remark}
\lbl{rem.eLambda}
Under complex conjugation (but keeping the orientation of the ambient 
manifold fixed), we have $\CS_{\BC}(\bar{\rho})=\overline{\CS_{\BC}(\rho)}$.
It follows that $\Lambda_{M,G}$ (resp.$e\Lambda_{M,G}$) is invariant under 
$\l \leftrightarrow \bar\l$ (resp. $\l \leftrightarrow 1/\bar\l$).
The involution $\l \leftrightarrow 1/\bar\l$ preserves the set of rays 
through zero.

On the other hand, under orientation reversal, we have 
$\Lambda_{\tau M,G}=-\Lambda_{M,G}$ and 
$\eLambda_{\tau M,G}=\tau \eLambda_{M,G}$,
where $\tau(\l)=1/\l$ for $\l \neq 0$ and $\tau(0)=0$. We thank C. Zickert
for help in identifying those involutions.

Thus, a typical picture for $\Lambda_{M,G}$ and $\eLambda_{M,G}\setminus\{0\}$
is the following:
$$
\psdraw{singularities}{2in} \quad 
\psdraw{plotrandom}{2in}
$$
where the horizontal spacing between two dots in any horizontal line is 
$4 \pi^2=39.4784176044\dots $. 
\end{remark}

Complexification is a key idea, theoretically, as well as computationally. 
For example, 
$X_{G_{\BC}}(M)$ is an algebraic variety whereas its real part $X_{G}(M)$ is
only a compact set with little structure. The only systematic
way (known to us) to give exact formulas for the critical values of $\CS$ is 
to actually compute the critical values of $\CS_{\BC}$ and then 
decide which of these are critical values of $\CS$. For $G=\SU(2)$, there 
are exact and numerical computer implementations for the critical values
of $\CS_{\BC}$: see {\tt snap} \cite{Sn} and \cite{Ne, DZ}. 

Complexification also reveals the arithmetic structure of $\La_{M,G}$:
its elements are {\em periods} of weight $2$
(in the sense of Kontsevich-Zagier \cite{KZ}), of a 
rather special kind. Namely, the critical values of $\CS_{\BC}$
are $\BQ$-linear combinations of the {\em Rogers dilogarithm} function
evaluated at algebraic numbers. The latter is defined by:

\begin{equation}
\lbl{eq.rogers}
L(z)=\Li_2(z) +\frac{1}{2} \log(z) \log(1-z)-\frac{\pi^2}{6} 
\end{equation} 
for $z \in (0,1)$ and analytically continued as a multivalued analytic 
function in $\BC\setminus\{0,1\}$. Here,  $\Li_2(z)=\sum_{n=1}^\infty z^n/n^2$ 
is the classical dilogarithm function. For $G=\SU(2)$, our identification
of the complexified Chern-Simons action with \cite{NZ,Ne,GZ} is as follows:

\begin{equation}
\lbl{eq.CCS}
\CS_{\BC}(\rho)=i \text{Vol}(\rho) + \CS(\rho).
\end{equation}

For higher rank groups, exact 
formulas for the critical values of $\CS_{\BC}$ may also be given in terms
of the Rogers dilogarithm function at algebraic numbers. 
This will be explained in detail in a separate publication.
As an illustration of the above discussion let us give an example.

\begin{example}
\lbl{ex.1}
If $M$ is obtained by $1/2$ surgery on the $4_1$ knot, then 
$\eLambda\setminus\{0\}$ consists of $13$ points plotted as follows:
$$
\psdraw{plotsing}{2in}
$$
In this picture, a higher resolution reveals that the points nearest to the
vertical axis consist of {\em two} distinct but close pairs. 
We thank C. Zickert for providing us with an exact and numerical computation 
of the critical values of the complexified Chern-Simons map. 
\end{example}

Let us end this section with a problem:

\begin{problem}
\lbl{prob.1}
Give an direct relation between the cubic polynomial action 
\eqref{eq.CSA} and the Rogers dilogarithm \eqref{eq.rogers}.
\end{problem}

A transcendental relation between the Chern-Simons action and the Rogers
dilogarithm was given in \cite[Sec.6.2]{Ga4}, using the third algebraic 
$K$-theory group $K_3^{\text{ind}}(\BC)$.

\subsection{Extension to knots in 3-space}
\lbl{sub.knots}

So far, the discussion involved closed 3-manifolds. Let us now consider 
knots $K$ in the 3-sphere. For simplicity, we will
assume that $G=\SU(2)$ (so $G_{\BC}=\SL(2,\BC)$) in this section.

Let us fix a knot $K$ in 3-sphere and a nonnegative integer $n$. 
Let us denote by $Z_{K,\SU(2),n}$ the {\em Kashaev invariant}
of $K$ (see \cite{Ka}), which is also identified by \cite{MM} with the value 
of the Jones polynomial colored by the $n$-th dimensional irreducible 
representation of $\mathfrak{sl}_2$ evaluated at $q=e^{2 \pi i/n}$ 
(and normalized to be $1$ at the unknot).
Thus, we may define:

\begin{equation}   
\lbl{eq.Fzknot}
\Lnp_{K,\SU(2)}(z)=\sum_{n=0}^\infty Z_{K,\SU(2),n} z^n
\end{equation}

We define the perturbative invariant $\Lp_{K,\SU(2)}(z)$ as follows:
\begin{itemize}
\item[(a)]
Take the sequence $J_{K,n}(q) \in \BZ[q^{\pm}]$ of the Jones polynomials
of $n$, colored by the $n$-dimensional 
irreducible representation of $\mathfrak{sl}_2$, and normalized by
$J_{\text{unknot},n}(q)=1$. See for example, \cite{Tu1,Tu2}.
\item[(b)]
It turns out that there exists a power series $J_K(u,q) \in \BQ(u)[[q-1]]$
so that $J_K(q^n,q)=J_{K,n}(q) \in \BQ[[q-1]]$ for all $n \in \BN$;
see for example \cite{Ga2,GL3}.
\item[(c)]
Consider the power series $J_K(1,e^{1/x})\in \BQ[[1/x]]$.
\item[(d)]
Define $\Lp_{K,\SU(2)}(z)=\calB(J_K(1,e^{1/x})) \in \BQ[[z]]$.
\end{itemize}
In \cite{GL3} (resp. \cite{GL2}) it was shown that $\Lnp_{K,\SU(2)}(z)$ (resp. 
$\Lp_{K,\SU(2)}(z)$) is analytic for $z$ in a neighborhood of $0$.
Regarding the critical values of the Chern-Simons action, we will 
consider only {\em parabolic} $\SL(2,\BC)$ representations; i.e.,
those representations so that the trace of every peripheral element is $\pm 2$.
As in the case of closed 3-manifolds, we may identify the moduli space
of {\em parabolic} flat $\SL(2,\BC)$-connections on the knot complement
with $X_{\SL(2,\BC)}^{\mathrm{par}}(K)$:

\begin{equation}
\lbl{eq.XGpar}
X_{\SL(2,\BC)}^{\mathrm{par}}(K)=\Hom^{\mathrm{par}}(\pi_1(S^3\setminus K),
\SL(2,\BC))/\SL(2,\BC)
\end{equation}
In addition, we have a map, described in detail in \cite{GZ}:

\begin{equation}
\lbl{eq.CSCknot}
\CS_{\BC}: X_{\SL(2,\BC)}^{\mathrm{par}}(M) \longto \BC/\BZ(2)
\end{equation}
$\CS_{\BC}$ is again a locally constant map, and takes finitely many values
in $\BC/\BZ(2)$, and allows us to define the sets $\Lambda_{K,\SU(2)}$
and $\eLambda_{K,\SU(2)}$.

Let us end this section with an example of the simplest knot $3_1$
and the simplest hyperbolic knot $4_1$.

\begin{example}
\lbl{ex.31}
If $K=3_1$ is the right hand trefoil knot $3_1$, then
\begin{equation}
\lbl{eq.elambda31}
\eLambda=\{e^{\pi i/12},1,0\}
\end{equation}
can be plotted as follows:
$$
\psdraw{plotsing31}{1.7in}
$$
\end{example}

\begin{example}
\lbl{ex.41}
If $K=4_1$ is the simplest hyperbolic knot, then
\begin{equation}
\lbl{eq.elambda41}
\eLambda=\{e^{-\Vol(4_1)/(2\pi)},1, e^{\Vol(4_1)/(2\pi)},0\}
\end{equation}
can be plotted as follows:

$$
\psdraw{plotsing41}{2in}
$$
\noindent
where 
$$
\Vol(4_1)=-i \Li_2(e^{2 \pi i/6})+i\Li_2(e^{-2 \pi i/6}) = 
2.02988321281930725004240510855 \dots
$$
is the {\em Volume} of $4_1$; see \cite{Th}, and numerically,

\begin{eqnarray*}
e^{-\Vol(4_1)/(2\pi)} &=& 0.72392611187952434703122933736\dots \\
e^{\Vol(4_1)/(2\pi)} &=& 1.38135644451849779337146695685\dots
\end{eqnarray*}
\end{example}

\section{Arithmetic resurgent functions}
\lbl{sec.ar}

\subsection{Resurgent functions}
\lbl{sub.resfunctions}

The arithmetic nature of $\La_{M,G}$ is only the beginning. 
It turns out that 

\begin{itemize}
\item[(a)]
the {\em Ray-Singer torsion invariants} associated to a 
$G_{\BC}$-representation of $\pi_1(M)$ are also algebraic numbers 
(this is proven and discussed in detail in \cite{DG}), 
\item[(b)]
in case $M$ is hyperbolic and $G=\SU(2)$ the geometric representation
is defined over a number field; see \cite{MR},
\item[(c)]
the perturbative expansions of the quantum invariants
$Z_{M,G,n}$ around an $G_{\BC}$-representation of $\pi_1(M)$ are conjectured
to be algebraic numbers; see for example \cite{GM} and \cite{CG1}.
\end{itemize}
 
The need to formulate these algebricity properties in a uniform way,
as well as some results in some key cases, lead us to the notion of
an arithmetic resurgent series, which is the focus of this section.

Along the way, we will also discuss the auxiliary notion of a Gevrey series 
of mixed type, perhaps of interest on its own.

{\em Resurgence} was coined by \'Ecalle in his study of analytic continuation
of formal and actual solutions of differential equations, linear or not;
see \cite{Ec1}. An earlier term used by \'Ecalle was
the notion of {\em endless analytic continuation}. 
The concept of resurgence has influenced our thinking deeply. Unfortunately,
it is not easy to find an accepted definition of resurgence, or a reference
for it in the literature. On the other hand, there are several expositions
of instances of resurgence, covering special cases of this rather general 
notion. The curious reader may consult \cite{Co1,Co2,Dl,Ml,Sa} for a 
detailed discussion in addition to the original work \cite{Ec1}.

Given the gap in the literature, we will do our best to give a working
(and exact) definition of resurgence, which features some properties 
which are arithmetically important, and analytically rare.
Let us begin by recalling the monodromy of multivalued germs of interest
in this paper. We refer the reader to \cite{Ml} for further details.
The next definition is motivated by the types of singularities that appear
in algebraic geometry; see \cite{Kz}.

\begin{definition}
\lbl{def.quasigerm}
A {\em multivalued analytic germ} $f(z)$ at $z=0$ is called 
{\em quasi-unipotent} if
its monodromy $T$ around $0$ satisfies the condition:
\begin{equation}
\lbl{eq.Tmonodromy}
(T^r-1)^s=0
\end{equation} 
for some nonzero natural numbers $r$ and $s$. 
\end{definition}

It is easy to see that a quasi-unipotent germ $f(z)$ can be written as a 
finite sum of germs of the form:
\begin{equation}
\lbl{eq.nilsson}
\sum_{\a,\b} c_{\a,\b} z^{\a} (\log(z))^{\b} h_{\a,\b}(z)
\end{equation}
where $\a \in \BQ$, $\b \in \BN$, and $h_{\a,\b}(z) \in \BC\{z\}_0$, where
$\BC\{z\}_0$ is the ring of power series convergent at $z=0$ (identified
with the ring of germs of functions analytic at $z=0$). 
See for example, \cite{Ml}. Series of the form \eqref{eq.nilsson} are often
known in the literature as series of the {\em Nilsson class}; see 
\cite{Ni1,Ni2}.

The rationality of the exponents $\{\a\}$ above is an important feature that
always appears in algebraic geometry and arithmetic and rarely appears in
analysis. For a further discussion; see \cite{Ga5}.

\begin{definition}
\lbl{def.res}
We say that $G(z)=\sum_{n=0}^\infty a_n z^n$ is an {\em resurgent 
series} (and write $G(z) \in \RES$) if
\begin{itemize}
\item[(a)]
$G(z)$ is convergent at $z=0$.
\item[(b)]
$G(z)$ has analytic continuation as a multivalued function in $\BC\setminus
\La$, where $\La \subset \BC$ is a discrete subset of $\BC$.
\item[(c)] The local monodromy is quasi-unipotent. 
\end{itemize}
\end{definition}

In what follows, we will make little distinction between a germ, its analytic
continuation, and the corresponding function. So, we will speak about the
algebra of resurgent functions.

\subsection{Gevrey series of mixed type}
\lbl{sub.GM}

The following definition is motivated by the properties of some
power series that are associated to knotted 3-dimensional objects.

\begin{definition}
\lbl{def.GM}
We say that series $G(z)=\sum_{n=0}^\infty a_n z^n$ is a {\em Gevrey series of
mixed type} $(r,s)$ if 
\begin{itemize}
\item[(a)]
$r,s \in \BQ$ and the coefficients $a_n$ lie in a number field $K$, and
\item[(b)]
there exists a constant $C>0$ so that for every $n \in \BN$
the absolute value of every Galois conjugate of $a_n$ is less than or equal to 
$C^n n!^r$, and
\item[(c)]
the common denominator of $a_0/0!^s,\dots, a_n/n!^s$ is less than or equal 
to $C^n$.
\end{itemize}
\end{definition}

\begin{remark}
\lbl{rem.GM1}
If $G(z)$ is Gevrey of mixed type $(r,s)$ and $r' \geq r$, $s' \leq s$,
then $G(z)$ is also Gevrey of mixed type $(r',s')$.
\end{remark}

Let $\BC^{\GM}\{z\}$ (resp. $\BC^{\GM}\{z\}_{r,s}$) denote the 
$\overline{\BQ}[z]$-algebra of Gevrey series of mixed type (resp. mixed type
$(r,s)$).

\begin{remark}
\lbl{rem.GM2}
The Gevrey series of mixed type $(r,r)$ are precisely the important class of 
{\em arithmetic Gevrey series of type} $r$, introduced and studied by 
Andr\'e; see \cite{An}.
\end{remark}

\begin{remark}
\lbl{rem.GM3}
A $G$-{\em function} $G(z)$ in the sense of Siegel is a Gevrey series of mixed
type $(0,0)$ which is {\em holonomic}, i.e., it satisfies a linear ODE
with coefficients in $\overline{\BQ}[z]$; see \cite{An,Bo,DGS}.
\end{remark}

\subsection{Arithmetic resurgent functions}
\lbl{sub.Gfunctions}

Restricting the functions $h_{\a,\b}(z)$ in \eqref{eq.nilsson} to be 
Gevrey series of type $(0,s)$, we arrive at the notion of an arithmetic 
quasi-unipotent germ.

\begin{definition}
\lbl{eq.aquasigerm}
We say that a multivalued analytic germ $f(z)$ at $z=0$ is {\em arithmetic 
quasi-unipotent} if it can be written as a finite sum of 
germs of the form
\eqref{eq.nilsson} where $h_{\a,\b}(z) \in \BC^{\GM}\{z\}$.
\end{definition}

Combining this definition with the notion of a resurgent function,
we arrive at the notion of an arithmetic resurgent function.
Let $\calP \subset \BC$ denote the countable set of {\em periods} in the
sense of Kontsevich-Zagier; see \cite{KZ}.

\begin{definition}
\lbl{def.ares}
We say that $G(z)=\sum_{n=0}^\infty a_n z^n$ is an {\em arithmetic resurgent 
series} (and write $G(z) \in \ARES$) if
\begin{itemize}
\item[(a)]
$G(z)$ is a resurgent series.
\item[(b)]
The singularities $\La$ of $G(z)$
is a discrete subset of $\calP$, where $\calP$ denotes the set of 
{\em periods} as defined by Kontsevich-Zagier; see \cite{KZ}.
\item[(c)]
The local monodromy is arithmetic quasi-unipotent.
\item[(d)]
The global monodromy is defined over $\bar\BQ$.
\end{itemize}
\end{definition}

\subsection{The Taylor series of an arithmetic resurgent function}
\lbl{sub.propar}

In a separate publication we will give the proof of the following
proposition which shows that the coefficients of the Taylor series at the
origin of an arithmetic resurgent function have asymptotic expansions
themselves. The transseries conclusion of the proposition below (without
any claims on the mixed Gevrey type) follows from the resurgence hypothesis
on $G(z)$ alone, and are studied systematically in the upcoming book
of Costin \cite{Co2}, as well as in \cite{Co3}.

\begin{proposition}
\lbl{prop.expar}
If $G(z)=\sum_{n=0}^\infty a_n z^n$ is arithmetic resurgent, then
$$
a_n \sim \sum_{\l} \l^{-n} f_{\l}\left(\frac{1}{n}\right)
$$
where the sum is over the finite set of singularities of $G(z)$ nearest
to the origin, and 
$f_{\l}(z)$ is a finite sum of series of the form \eqref{eq.nilsson}
where $h_{\a,\b,\l}(z)$ are Gevrey of mixed type $(1,s)$ (for some $s$).
\end{proposition}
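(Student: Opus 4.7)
The plan is to extract the asymptotics of $a_n$ by applying the classical Darboux/Cauchy contour method to the hypothesized local Nilsson structure of $G(z)$ near its nearest singularities, and then to check by hand that the arithmetic attributes (rational exponents, common-denominator bound, and number-field-valued coefficients) are inherited by the output. Let $\rho_0 = \min\{|\lambda| : \lambda \in \Lambda\}$. By Cauchy's formula $a_n = \frac{1}{2\pi i}\oint_{|z|=\rho} G(z)\, z^{-n-1}\, dz$ for any $\rho < \rho_0$. Since $\Lambda$ is discrete, one can enlarge $\rho$ to a radius $\rho_1 > \rho_0$ strictly smaller than the next shell of singularities, picking up along the way a Hankel-type contribution $\gamma_\lambda$ around each nearest singularity $\lambda$, plus the outer circle integral, which is bounded by $O(\rho_1^{-n})$ and is exponentially smaller than any $\lambda^{-n}$ with $|\lambda|=\rho_0$.

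Near each nearest $\lambda$, and on each relevant sheet, expand $G(z)$ in the Nilsson normal form \eqref{eq.nilsson}: a finite sum of terms $c_{\alpha,\beta}(z-\lambda)^\alpha \log(z-\lambda)^\beta h_{\alpha,\beta,\lambda}(z-\lambda)$ with $\alpha \in \BQ$, $\beta \in \BN$, and $h_{\alpha,\beta,\lambda} \in \BC^{\GM}\{w\}_{0,s}$. Substitute into the Hankel integral around $\lambda$, change variables by $z = \lambda + u/n$, Taylor expand $z^{-n-1} = \lambda^{-n-1}(1+u/(\lambda n))^{-n-1}$ and $h_{\alpha,\beta,\lambda}(u/n)$ about $u=0$, and integrate term by term against $u^{\alpha+k}(\log u)^\beta$ along the Hankel contour. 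Each term produces a $\Gamma$-value (and, from the logarithms, its $\psi$-derivatives) times $\lambda^{-n} n^{-\alpha-k-1}(\log n)^{\gamma}$ for various $\gamma \leq \beta$. Reassembling over $k$ yields a Nilsson-type series in $w = 1/n$, and the hypothesis that the global monodromy is defined over $\bar\BQ$ ensures that the sheet-to-sheet connection coefficients multiplying the $c_{\alpha,\beta}$ are algebraic, so the coefficients of the resulting $f_\lambda(w)$ lie in a fixed number field $K$.

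It remains to verify that the Taylor part of each $f_\lambda$ is Gevrey of mixed type $(1,s)$. The coefficient of $w^{\alpha+k+1}(\log w)^\gamma$ in $f_\lambda(w)$ is, up to an algebraic factor, a $\BQ$-linear combination of Taylor coefficients of $h_{\alpha,\beta,\lambda}$ with indices $\leq k$, weighted by $\Gamma$- or $\psi$-values at $-\alpha-k$. The $C^k$ growth on $h_{\alpha,\beta,\lambda}$ combines with the $k!$-growth of the Hankel factor $1/\Gamma(-\alpha-k)$ to give the required $C^k k!$ bound, which is the type-$(1,\cdot)$ half of Definition \ref{def.GM}. The denominator bound on $h_{\alpha,\beta,\lambda}$ (type $(\cdot,s)$) then propagates to give the type $(1,s)$ conclusion, provided one controls the denominators of $\Gamma$- and $\psi$-values at shifted rational points.

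The main obstacle I expect is exactly that last denominator control: one must exhibit an explicit common denominator growing at most like $C^k$ for the arithmetic coefficients of $f_\lambda$, which requires combining the hypothesis on $h_{\alpha,\beta,\lambda}$ with a non-trivial estimate on the denominators of the $\Gamma$- and $\psi$-values at rational arguments, and with the $\bar\BQ$-structure of the connection matrix between sheets of $G$. This parallels the classical denominator estimates in Andr\'e's theory of arithmetic Gevrey series referenced in Remark \ref{rem.GM2}, and should be the only place where more than a direct application of Darboux's method is needed; everything else is a transcription of the Nilsson local data through the Hankel transform.
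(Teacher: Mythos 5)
The paper does not contain a proof of Proposition~\ref{prop.expar}: the text immediately preceding the statement says the proof ``will be given in a separate publication'' and cites Costin \cite{Co2,Co3} only for the non-arithmetic transseries conclusion, so there is no argument in the paper to compare yours against. Your route --- Cauchy's formula, contour enlargement past the inner shell of singularities, Hankel/Darboux analysis of the local Nilsson data, then term-by-term bookkeeping of the arithmetic attributes --- is the standard singularity-analysis mechanism and is almost certainly the intended one; you also locate correctly where the genuinely new content lies, namely in promoting the local mixed type $(0,s)$ of the $h_{\alpha,\beta,\lambda}$ at each singularity to the claimed mixed type $(1,s)$ of the Taylor parts of $f_\lambda$.

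Two comments on the obstacle you flag. The denominator control on the $\Gamma$- and $\psi$-values at rational arguments is probably less delicate than you suggest: for $\alpha\in\BQ$ with denominator $q$ one has $1/\Gamma(-\alpha-k)=\Gamma(-\alpha)^{-1}\prod_{j=1}^{k}(-\alpha-j)$, a finite product whose denominator divides $q^k$ and whose modulus is $O(C^k k!)$ --- exactly the size required --- and the $\psi$-derivatives obey the analogous finite recursion $\psi(x+1)=\psi(x)+1/x$. The transcendental prefactor $\Gamma(-\alpha)^{-1}$ (or the $\psi$-value at a fixed rational) is then absorbed into the constants $c_{\alpha,\beta}$ of \eqref{eq.nilsson}, which Definition~\ref{def.ares} does not require to be algebraic, so it never enters the Gevrey estimate on $h_{\alpha,\beta,\lambda}$. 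What you elide, and should be made explicit in a real write-up, is a pair of analytic points: (i) the contour deformation past $|z|=\rho_0$ must be carried out in the universal cover of $\BC\setminus\Lambda$, keeping track of the sheet on which each outer arc lies; the $O(\rho_1^{-n})$ error then follows because the deformed contour is a compact subset of $\BC\setminus\Lambda$ on which the continuation of $G$ is bounded, but this needs saying since Definition~\ref{def.res} by itself carries no explicit growth clause; and (ii) the Nilsson expansion of $G$ at $\lambda$ converges only on a small punctured disc, so the term-by-term Hankel integration and the interchange of the sum over $k$ with the integral must be justified by a Watson-lemma style truncation. These are standard but not automatic.
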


\subsection{Arithmetic invariants of arithmetic resurgent functions} 
\lbl{sub.invariantsar}

Obvious arithmetic invariants of an arithmetic resurgent function $G(z)$
are:

\begin{itemize}
\item[(a)] 
The set of singularities $\La \subset \calP$.
\item[(b)]
The local quasi-unipotent monodromy, and its field of definition. 
\item[(c)]
The global monodromy, defined over $\bar\BQ$.
\end{itemize}

\subsection{$G$-functions are arithmetic resurgent}
\lbl{sub.3source}

This section is logically independent of the rest of the paper and can be
skipped at first reading, although
it provides some useful examples of arithmetic resurgent functions.
A main example of arithmetic resurgent functions comes from 
the following theorem of Andr\'e.

\begin{theorem}
\lbl{thm.andre}
$G$-functions are arithmetic resurgent with singularities
a finite set of algebraic numbers.
\end{theorem}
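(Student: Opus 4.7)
The plan is to reduce everything to the structure theorems for the minimal differential operator annihilating a $G$-function, culminating in André's theorem on $G$-operators.

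First I would set up the framework. Let $G(z)\in\BC^{\GM}\{z\}_{0,0}$ be holonomic, and let $L\in\bar\BQ(z)[d/dz]$ be the minimal monic differential operator with $L(G)=0$. Clearing denominators, write the leading coefficient as a polynomial $p(z)\in\bar\BQ[z]$; then the singular set $\Lambda_0\subset\bar\BQ$ of $L$ (together with $\infty$) is finite and consists of algebraic numbers. By the classical existence theorem for linear ODEs, $G(z)$ extends to a multivalued holomorphic function on $\BC\setminus\Lambda_0$. This takes care of the convergence at $0$ (which is immediate from the Gevrey $(0,0)$ hypothesis) and the existence of the analytic continuation on the complement of a finite subset of $\bar\BQ\subset\calP$.

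Next, the hard step: the quasi-unipotent and Nilsson-class structure at each singular point. Here I would invoke \emph{André's theorem on $G$-operators} (the Chudnovsky--Katz--André result, in André's book \cite{An}): the minimal operator $L$ annihilating a $G$-function is a $G$-operator, and hence has only regular singular points with rational exponents at every point of $\Lambda_0\cup\{\infty\}$. Regularity plus rationality of exponents gives, via the Frobenius method, local fundamental systems of the form \eqref{eq.nilsson} with $\alpha\in\BQ$ and $\beta\in\BN$. Thus the local monodromy $T$ satisfies $(T^r-1)^s=0$ for suitable $r,s$, which is exactly Definition~\ref{def.quasigerm}.

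To upgrade quasi-unipotent to \emph{arithmetic} quasi-unipotent, I need the coefficient germs $h_{\alpha,\beta}(z)$ in the Frobenius expansion to lie in $\BC^{\GM}\{z\}$. The key is that in André's framework the Frobenius solutions at an algebraic singular point $\lambda\in\Lambda_0$ can be constructed from $L$ by a purely algebraic procedure (substituting $z-\lambda\mapsto (z-\lambda)\cdot\text{unit}$ and solving recursively); the resulting recursions have coefficients in $\bar\BQ(\lambda)[(z-\lambda)^{\pm 1}]$, and the Bombieri--André--Chudnovsky size estimates for $G$-operators show that the Frobenius coefficients $h_{\alpha,\beta}(z-\lambda)$ are themselves $G$-functions up to a bounded denominator, hence Gevrey of mixed type $(0,s)$ for some $s$. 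Applying this at each singular point, including $\infty$ via $z\mapsto 1/z$, gives the arithmetic quasi-unipotence condition (c) of Definition~\ref{def.ares}.

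Finally, condition (d) (global monodromy defined over $\bar\BQ$) follows from differential Galois theory: since $L$ has coefficients in $\bar\BQ(z)$, its Picard--Vessiot ring is defined over $\bar\BQ$, so the differential Galois group $G_L$ is a linear algebraic group over $\bar\BQ$ acting on the $\bar\BQ$-solution space of $L$. By the Schlesinger density theorem (applicable since all singularities are regular), the monodromy group is Zariski-dense in $G_L$; choosing a Frobenius basis at an algebraic base point gives connection matrices with entries in $\bar\BQ$, so the monodromy representation is conjugate to one landing in $\mathrm{GL}_n(\bar\BQ)$, establishing (d). Combining all the above with the already-remarked finiteness of $\Lambda_0\subset\bar\BQ\subset\calP$ verifies every clause of Definition~\ref{def.ares}.

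The main obstacle is step two: the fact that the minimal operator of a $G$-function has only regular singularities with rational exponents is a deep theorem (André's solution of a conjecture of Bombieri), and I would simply cite it from \cite{An}. The arithmetic refinement of the Frobenius coefficients in step three is technically delicate but is built into André's proof via the $G$-operator size estimates; the other steps are essentially bookkeeping.
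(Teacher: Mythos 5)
The paper contains no proof of this statement: it is presented as ``the following theorem of Andr\'e'' and cited to \cite{An} without argument. So there is nothing in the paper itself to compare your proposal against; the question is whether your reconstruction is mathematically sound.

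Your treatment of items (a)--(c) of Definition~\ref{def.ares} is a reasonable summary of the Chudnovsky--Andr\'e--Katz theory. Passing to the minimal annihilating operator $L$, observing that it is a $G$-operator, and invoking the theorem that $G$-operators are Fuchsian with rational exponents at every singularity (hence the singular locus is a finite subset of $\bar\BQ\cup\{\infty\}$ and the local germs are of Nilsson class with quasi-unipotent monodromy) is exactly the right route. You correctly flag that promoting the coefficient germs $h_{\alpha,\beta}$ in \eqref{eq.nilsson} to Gevrey of mixed type $(0,s)$ requires the quantitative size estimates in \cite{An}; that is indeed where the arithmetic content lives, and your sketch of how to extract it from the recursion is plausible.

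Item (d), however, is a genuine gap. The Schlesinger density theorem only tells you that the Zariski closure of the monodromy group equals the differential Galois group $G_L$, which is defined over $\bar\BQ$; it says nothing about the entries of the monodromy matrices themselves. Your assertion that ``choosing a Frobenius basis at an algebraic base point gives connection matrices with entries in $\bar\BQ$'' is false: the Frobenius basis has $\bar\BQ$ Taylor coefficients, but analytic continuation along loops introduces transcendental constants. Already for $y'=1/z$ the monodromy of $\log z$ in the Frobenius basis $(1,\log z)$ is $\left(\begin{smallmatrix}1 & 2\pi i\\ 0 & 1\end{smallmatrix}\right)$, whose entry $2\pi i$ is not algebraic; for hypergeometric operators the connection constants are Gamma-quotients, i.e.\ periods. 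In that toy case the representation happens to be conjugate into $\mathrm{GL}_2(\BQ)$, but that requires a rescaling you have not produced, and in general the fact that the monodromy representation of a $G$-operator is conjugate to one over $\bar\BQ$ is not a formal consequence of differential Galois theory. For $G$-operators of geometric origin one gets (d) because the monodromy preserves the integral lattice in singular cohomology, but that all $G$-functions are of geometric origin is precisely the open Bombieri--Dwork conjecture, which this very subsection of the paper mentions. So either (d) must be deduced by a different argument specific to $G$-operators, or it must be interpreted more weakly (e.g.\ as a statement about the Zariski closure rather than the representation); as written your step does not close the gap.
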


$G$-functions arise naturally in three contexts:

\begin{itemize}
\item[(a)]
From geometry, related to the regularity of the Gauss-Manin connection. 
\item[(b)]
From arithmetic. 
\item[(c)]
From enumerative combinatorics.
\end{itemize}

For a geometric construction of resurgent functions, let us recall the
following result from \cite{Kz}; see also \cite{De,Br}.
Let $\overline{S}/\BC$ be a projective non-singular connected curve
and $S=\overline{S}\setminus\{p_1,\dots,p_r\}$ the complement of a finite set 
of points. Suppose that
$$
\pi: X \longto S
$$
is a proper and smooth morphism. For every $i$, the algebraic de Rham 
cohomology $H^i_{\mathrm{dR}}(X/S)$ is an algebraic differential equation on 
$S$,
and the local system $H^i(X_s,\BC)$ (for $s \in S$)
is the local system of germs of solutions
of that equation. Let $T$ denote the local monodromy around a point $p_i$.
Then, we have the following theorem.

\begin{theorem}
\lbl{thm.katz}\cite{Ka}
The algebraic differential equation is regular singular and the local
monodromy $T$ is quasi-unipotent.
\end{theorem}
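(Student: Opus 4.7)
The assertion is a local statement at each puncture $p_i$, so the plan is to restrict to a punctured disk $\Delta^\ast$ around $p_i$, replace $\pi: X \to S$ by its pullback $\pi_\Delta: X_\Delta \to \Delta^\ast$, and study the Gauss–Manin connection $\nabla$ on $H^i_{\mathrm{dR}}(X_\Delta/\Delta^\ast)$ together with its monodromy $T$ around $p_i$. Both conclusions (regularity and quasi-unipotence) are properties of the pair $(H^i_{\mathrm{dR}}(X_\Delta/\Delta^\ast), \nabla)$ near the puncture.

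For regularity, the plan is to use Hironaka's resolution of singularities to produce a smooth proper compactification $\bar\pi: \bar X \to \Delta$ of $\pi_\Delta$ for which the boundary $\bar X \setminus X_\Delta$ is a reduced normal crossing divisor $D$ sitting over $\{p_i\}$. Deligne's theorem on logarithmic extensions then guarantees that the Gauss–Manin connection extends canonically to a connection on $\bar\Delta$ whose pole along $p_i$ is logarithmic. A connection with logarithmic poles is regular singular by definition, and this is transferred back to $\nabla$ on $X_\Delta^\ast$. This step is essentially formal once a good compactification is available; the only real input is resolution of singularities.

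The main content is quasi-unipotence. Here I would follow Katz's strategy from \cite{Kz}. First, spread out the family $\pi$ over a finitely generated $\BZ$-subalgebra $R \subset \BC$: choose a smooth proper model $\bar{\mathcal X} \to \bar{\mathcal S}$ over $\mathrm{Spec}(R)$ whose generic fiber recovers $\bar\pi$, together with a horizontal divisor extending $\{p_1,\dots,p_r\}$. For each closed point of $\mathrm{Spec}(R)$ with residue field $\BF_q$, the $\ell$-adic étale cohomology of the geometric fibers carries commuting actions of a local monodromy operator (around the reduction of $p_i$) and of a Frobenius, and Grothendieck's local monodromy theorem together with the Weil bounds forces the eigenvalues of the $\ell$-adic local monodromy to be roots of unity of bounded order. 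The comparison between $\ell$-adic and complex monodromy — for which one uses that both arise from the same $\pi_1$-representation up to base change, and that the characteristic polynomial of $T$ has coefficients in $\BZ$ and is independent of $\ell$ — shows that the eigenvalues of $T$ itself are roots of unity. Combined with regularity (which bounds the size of the Jordan blocks), this gives $(T^r - 1)^s = 0$ for appropriate $r, s$.

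The main obstacle is the quasi-unipotence step: regularity is a relatively soft consequence of compactification plus Deligne's logarithmic extension, but controlling the eigenvalues of $T$ requires genuinely nontrivial input. The alternative Hodge-theoretic route — semistable reduction after a finite base change, followed by the Clemens–Schmid sequence or the $\mathrm{SL}_2$-orbit theorem to conclude unipotence of the pulled-back monodromy — faces the same difficulty in a different guise: one must either run the full machinery of variations of polarized Hodge structure, or run the arithmetic machinery of Weil II and $\ell$-adic comparison. Either way, isolating why the eigenvalues of $T$ are roots of unity (and not merely algebraic numbers on the unit circle) is the heart of the argument.
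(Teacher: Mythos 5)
This statement is cited in the paper (the tag is even misprinted: the reference should be to Katz's paper \cite{Kz}, not Kashaev's \cite{Ka}), and the paper gives no proof of it, so there is nothing internal to compare against; I can only assess your sketch on its own merits. Your outline is essentially correct as a proof strategy, but it is \emph{not} Katz's route. Katz's argument in \cite{Kz} is a characteristic-$p$ argument through $p$-curvature: one spreads out over a finitely generated $\BZ$-algebra, shows that the Gauss--Manin connection has nilpotent $p$-curvature for almost all primes $p$ (this is the Cartier/Oda--Katz theorem, tied to the conjugate filtration on de Rham cohomology mod $p$), and then proves a general theorem that a connection over a curve in characteristic zero whose reductions have nilpotent $p$-curvature for almost all $p$ must be regular singular with quasi-unipotent local monodromy; Turrittin's formal classification is the classification input, not Hironaka plus Deligne's logarithmic extension. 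What you propose instead is the $\ell$-adic route (Grothendieck's local monodromy theorem plus the étale/Betti comparison), which is a genuine alternative proof, in the spirit of SGA~7~I, and perhaps the more common modern presentation. One correction to the content: Grothendieck's local monodromy theorem does not rely on Weil bounds. It only uses the conjugation relation $\phi\sigma\phi^{-1}=\sigma^q$ between a Frobenius lift $\phi$ and a topological generator $\sigma$ of tame inertia, which after passing to the $\ell$-adic representation forces the eigenvalue set of $T$ to be stable under $\lambda\mapsto\lambda^q$ and hence to consist of roots of unity. Invoking Weil~II here is misleading and anachronistic. Apart from this, your two-step division (regularity is ``soft'' once a good compactification exists; quasi-unipotence is the real content and requires either the $\ell$-adic or the $p$-curvature input) is an accurate description of how all known proofs of this theorem are structured.
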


The $G$-functions obtained by Theorems \ref{thm.katz} and 
\ref{thm.andre} are closely related. The main conjecture is that all 
$G$-functions come from geometry. For a discussion of this topic, 
and for a precise formulation of the Bombieri-Dwork Conjecture, 
see the survey papers of \cite{Bo,Ka} and also \cite[p.8]{To}.

Let us discuss a third source of resurgent functions, which was 
discovered recently by the author in \cite{Ga5}. 

\begin{definition}
\lbl{def.hyperg}
A {\em hypergeometric term} $\ft_{n,k}$ (in short, {\em \gterm }) 
in variables $(n,k)$ where $k=(k_1,\dots,k_r)$
is an expression of the form:
\begin{equation}
\lbl{eq.defterm}
\ft_{n,k}=C_0^n \prod_{i=1}^r C_i^{k_i} \prod_{j=1}^J A_j(n,k)!^{\e_j}
\end{equation}
where $C_i \in \overline{\BQ}$ for $i=0,\dots,r$, 
$\e_j=\pm 1$ for $j=1,\dots,J$, and $A_j$ are integral 
linear forms in the variables $(n,k)$. We assume that for every 
$n \in \BN$, the set 
\begin{equation}
\lbl{eq.kset}
\{ k \in \BZ^r \, | \, A_j(n,k) \geq 0, \,\, j=1,\dots, J \}
\end{equation}
is finite. We will call a \gterm\ {\em balanced} if in addition it
satisfies the {\em balance condition}:
\begin{equation}
\lbl{eq.Ajsum}
\sum_{j=1}^J \e_j A_j=0.
\end{equation}
\end{definition}

Given a \bterm\ $\ft$, consider the corresponding sequence $(a_{\ft,n})$
defined by 
\begin{equation}
\lbl{eq.an}
a_{\ft,n}=\sum_k \ft_{n,k}
\end{equation}
where the summation index lies in the finite set \eqref{eq.kset},
and the corresponding generating series:
\begin{equation}
\lbl{eq.Gz}
G_{\ft}(z)=\sum_{n=0}^\infty a_{\ft,n} z^n \in \BQ[[z]].
\end{equation}
We will call sequences of the form $(a_{\ft,n})$ {\em balanced multisum 
sequences}. 

\begin{theorem}
\lbl{thm.Ga}\cite{Ga3}
For every \bterm\ $\ft$, the generating series $G_{\ft}(z)$ is a $G$-function. 
\end{theorem}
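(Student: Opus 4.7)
The plan is to verify in turn the three defining conditions for $G_{\ft}(z)$ to be a $G$-function: (i) that $a_{\ft,n} \in K$ for a fixed number field $K$, with all Galois conjugates of absolute value at most $C^n$; (ii) that the common denominator of $a_{\ft,0},\dots,a_{\ft,n}$ is bounded by $C^n$; and (iii) that $G_{\ft}(z)$ is holonomic, i.e., satisfies a linear ODE with coefficients in $\overline{\BQ}[z]$. Algebraicity is automatic since $\ft_{n,k}$ is built from fixed algebraic constants $C_i$ and factorials of integer linear forms, so $K$ can be taken to be the number field generated by the $C_i$.

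For the growth bound in (i) I would apply Stirling's formula $\log m! = m\log m - m + O(\log m)$ to $\prod_j A_j(n,k)!^{\e_j}$. The \bterm\ balance condition $\sum_j \e_j A_j = 0$ cancels the leading $\sum_j \e_j A_j \log A_j$ term up to $O(\log \max |A_j|)$, so the factorial product is bounded by $C^{n+|k|}$ uniformly over the summation polytope \eqref{eq.kset}. That polytope has vertices depending linearly on $n$, hence contains at most $O(n^r)$ lattice points, and the factors $C_0^n \prod_i C_i^{k_i}$ are geometric of size $D^{n+|k|}$. Combining these yields $|a_{\ft,n}| \le D^n$, and the same estimate applies to every Galois conjugate since replacing each $C_i$ by a conjugate preserves the whole argument. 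For (ii), each factor $A_j!^{\e_j}$ with $\e_j = -1$ is the reciprocal of an integer dividing a fixed factorial of size linear in $n$, and the $C_i^{k_i}$ contribute geometric denominators, giving a common denominator of size at most $C^n$ after summation.

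The main obstacle is holonomicity (iii), for which I would invoke the theory of creative telescoping for proper hypergeometric multisums, due to Wilf--Zeilberger and extended to several summation variables by Chyzak, Koutschan, and Chen--Hou--Mu. The balance hypothesis is precisely what guarantees that $\ft_{n,k}$ is a \emph{proper} hypergeometric term in $(n,k_1,\dots,k_r)$, so the algorithm produces a telescoping identity $\sum_i P_i(n)\, \ft_{n+i,k} = \sum_j \Delta_{k_j}\!\bigl(R_j\, \ft_{n,k}\bigr)$ with $P_i \in \overline{\BQ}[n]$ not all zero and $R_j$ rational in $(n,k)$. Summing $k$ over \eqref{eq.kset} then gives a polynomial recursion $\sum_i P_i(n)\, a_{\ft,n+i} = 0$ provided the boundary contributions vanish. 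Establishing this vanishing is the delicate technical step: on each face of the polytope some $A_j(n,k)$ drops below zero, so $\ft_{n,k}$ itself vanishes (by the convention $1/m! = 0$ for $m<0$), but one must check that the rational certificates $R_j\, \ft_{n,k}$ also do, possibly after clearing poles of $R_j$ or shifting summation variables. Once the polynomial recursion in $n$ is established, it translates directly into an ODE with polynomial coefficients for $G_{\ft}(z)$, yielding (iii) and completing the verification.
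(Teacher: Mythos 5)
The paper itself does not prove this theorem; it is imported verbatim from \cite{Ga3} (with the $G$-function aspect developed further in \cite{Ga5}), so there is no in-paper argument to compare against line by line. That said, your three-part plan (growth of Galois conjugates, denominator bound, holonomicity) is indeed the correct decomposition and is essentially the route those references take. Two points deserve correction. First, the balance condition $\sum_j \e_j A_j = 0$ is \emph{not} what makes $\ft_{n,k}$ a proper hypergeometric term: properness is automatic from the fact that the $A_j$ are integral linear forms with the finiteness condition on \eqref{eq.kset}, and the Wilf--Zeilberger machinery applies to any such term, balanced or not. What balance buys you is exactly the archimedean and $p$-adic moderation; without it the series is typically Gevrey of positive order and not a $G$-function at all. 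Second, your denominator argument as phrased (``each $A_j!^{\e_j}$ with $\e_j=-1$ is the reciprocal of an integer dividing a fixed factorial'') does not yield a geometric bound: a single reciprocal factorial $1/m!$ with $m$ linear in $n$ already has denominator of factorial, not geometric, size. The actual mechanism is $p$-adic: writing $v_p(m!)=\sum_{i\ge 1}\lfloor m/p^i\rfloor$ and using $\sum_j\e_j A_j=0$, each inner sum $\sum_j\e_j\lfloor A_j/p^i\rfloor$ lies in $[-J,J]$, so $v_p(\prod_j A_j!^{\e_j})\ge -J\lfloor\log_p(\max_j A_j)\rfloor$, uniformly over $k$ in the polytope. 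Multiplying $p^{O(\log n/\log p)}$ over primes $p\le Cn$ gives $e^{O(n)}$, which is the geometric bound you need for the common denominator of $a_{\ft,0},\dots,a_{\ft,n}$. Your treatment of the boundary terms in the creative-telescoping step correctly flags the genuinely delicate point; in practice one invokes Wilf--Zeilberger's fundamental theorem on holonomicity of definite proper-hypergeometric multisums \cite{WZ}, which packages precisely that verification. With the two corrections above, your argument is sound and matches the standard proof.
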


Let us point out that the proof of Theorem \ref{thm.Ga} in general
offers no help of locating the singularities of the function $G_{\ft}(z)$. 
To fill this gap, the author developed an efficient ansatz for the location 
of the singularities of $G_{\ft}(z)$; see \cite{Ga3}. 

\section{An arithmetic resurgence conjecture}
\lbl{sec.conj}


A {\em knotted object} $\calK$  denotes 
either a closed 3-manifold $M$ or a knot $K$ in 3-space. A {\em pair}
$(\calK,G)$ denotes either a closed 3-manifold $M$ and a compact Lie group 
$G$, or a knot $\calK=K$ in 3-space and $G=\SU(2)$.
If $G=\SU(2)$ and $M$ or $K$ is hyperbolic, let 

\begin{equation}
\lbl{eq.Lambdageom}
\Lambda^{\mathrm{geom}}_{\calK,G}=\cup_{\rho}
(-\CS_{\BC}(\rho)+\BZ(2))
\end{equation}
denote the critical values of the Galois conjugates $\rho$ of the 
geometric $\SL(2,\BC)$-representation.

We now have all the ingredients to formulate our Arithmetic 
Resurgence Conjecture, which is a refinement of the Analytic Continuation
Conjecture \ref{conj.0}.

\begin{conjecture}
\lbl{conj.1}(Arithmetic Resurgence)
For every pair $(\calK,G)$, $\Lnp_{\calK,G}(e^{z/(2 \pi i)})$ 
and $\Lp_{\calK,G}(z)$ are arithmetic resurgent  with possible singularities 
in the set $\Lambda_{\calK,G}$. If $\calK$ is hyperbolic and $G=\SU(2)$, then
the singularities of the above functions include 
$\Lambda^{\mathrm{geom}}_{\calK,G}$. 
\end{conjecture}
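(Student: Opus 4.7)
The plan is to reduce the conjecture to the $G$-function machinery assembled in Section \ref{sec.ar}. Theorems \ref{thm.andre} and \ref{thm.Ga} together say that the generating series of a balanced multisum is arithmetic resurgent with singularities in a finite set of algebraic numbers, so the strategy has three steps: (i) write each of $\Lp_{\calK,G}(z)$ and $\Lnp_{\calK,G}(e^{z/(2\pi i)})$ as (a Borel transform of) a balanced multisum in the sense of Definition \ref{def.hyperg}; (ii) invoke Theorem \ref{thm.Ga} to produce an underlying $G$-function and Theorem \ref{thm.andre} to obtain the arithmetic resurgence clauses; and (iii) match the singularities thus produced with the exponentiated critical values of $\CS_\BC$ via the saddle-point ansatz of \cite{Ga3}.

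For the perturbative side, the starting point is the Habiro cyclotomic expansion of the colored Jones polynomial (and the Aarhus-integral / LMO formula for rational homology spheres), which realize $J_K(u,q)\in\BQ(u)[[q-1]]$, respectively $W_\fg\circ\ZLMO_M$, as a finite balanced $q$-hypergeometric multisum. Substituting $u=1$ and $q=e^{1/x}$ and applying the Borel transform \eqref{eq.borel} turns the summand into a \bterm\ in the sense of Definition \ref{def.hyperg}, so Theorem \ref{thm.Ga} makes $\Lp_{\calK,G}(z)$ a $G$-function and Theorem \ref{thm.andre} delivers its arithmetic resurgence together with discreteness and algebraicity of the singular set. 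I would then locate the singularities inside $\Lambda_{\calK,G}$ by the ansatz of \cite{Ga3}: the Euler--Lagrange equations for the logarithm of the multisum integrand coincide, via the dictionary of Neumann--Zagier and Zickert, with the gluing equations of an ideal triangulation of $\calK$, whose critical values are exactly $-\CS_\BC(\rho)\bmod\BZ(2)$.

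The nonperturbative side is analogous in form but requires exchanging the $n$-summation defining $\Lnp_{\calK,G}(z)$ with the internal Reshetikhin--Turaev (or Kashaev) state sum evaluated at $q=e^{2\pi i/n}$. After the substitution $z=e^{w/(2\pi i)}$ one aims to identify the resulting double sum with a \bterm\ in an enlarged pair of indices, so that Theorem \ref{thm.Ga} applies to $\Lnp_{\calK,G}(e^{z/(2\pi i)})$ and produces the arithmetic resurgence assertion with singularities in $\Lambda_{\calK,G}$. The inclusion of $\Lambda^{\mathrm{geom}}_{\calK,G}$ for hyperbolic $(\calK,\SU(2))$ then follows because the discrete faithful representation and its Galois conjugates correspond to isolated nondegenerate critical points of the dilogarithmic potential attached to the multisum (the positivity of imaginary shape parameters guarantees that the descent contour meets these saddles), contributing nonzero leading terms to the multisum asymptotics and hence forcing the associated exponentials into the singular locus.

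The main obstacle is step (i) on the nonperturbative side: $Z_{M,G,n}$ depends on $n$ through quantum integers evaluated at $e^{2\pi i/n}$, not through a manifestly hypergeometric dependence on $n$, and producing a single balanced multisum simultaneously in $n$ and in the internal state indices is at present known only for one-dimensional sum-product families, such as surgeries on $3_1$ and $4_1$ treated in \cite{ES} and \cite{CG2}. A uniform cyclotomic / Habiro-integral formula for $Z_{M,G,n}$ whose summands are manifestly \gterm's is currently missing and seems to be the central open problem. A secondary, more technical obstacle is the verification of \emph{arithmetic} quasi-unipotence of the local monodromy in the strong sense of Definition \ref{eq.aquasigerm} at every singularity (not only those nearest the origin); doing this for further singularities amounts to an arithmetic bootstrap on iterated alien derivatives of $\Lnp_{\calK,G}$, which falls outside the $G$-function framework of Andr\'e and appears to require new tools.
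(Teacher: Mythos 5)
The statement you were asked to prove is labelled a \emph{Conjecture} in the paper, and the paper offers no proof of it --- only partial evidence (Section \ref{sec.evidence}), partial results for sum--product families (Theorem \ref{thm.CG}, via \cite{CG2,ES}), and explicit verifications for $S^3$, $S^1\times\Sigma_g$, $3_1$ and a handful of other cases. So there is no proof to compare your proposal against; what you have written is a research program, and to your credit you close by honestly flagging the two central obstacles (no uniform cyclotomic state-sum for $\Lnp$ whose summand is a \gterm, and arithmetic quasi-unipotence at remote singularities). Those are genuine gaps and the paper does not close them.

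That said, the central reduction in your plan overreaches in a way that is likely fatal. You propose to prove arithmetic resurgence of $\Lp_{\calK,G}(z)$ by showing it is a $G$-function via Theorem \ref{thm.Ga} applied to a \bterm. Two problems. First, the terms arising from the LMO invariant or the colored Jones polynomial after setting $q=e^{1/x}$ are built from $q$-factorials $(q)_n$, not from ordinary factorials of integral linear forms as in Definition \ref{def.hyperg}; they are not \gterm s, and Theorem \ref{thm.Ga} does not apply to them. The paper's actual mechanism for these examples (Section \ref{sec.SP} and Theorem \ref{thm.CG}) is a separate resurgence analysis of sum--product series, not the $G$-function machinery, and the $G$-function discussion is explicitly declared ``logically independent of the rest of the paper'' in Section \ref{sub.3source}. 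Second, even if some balanced-multisum presentation could be manufactured, $G$-functions are Gevrey of mixed type $(0,0)$ and holonomic (Remark \ref{rem.GM3}), whereas the paper's Conjecture \ref{conj.1} only asserts the strictly weaker property of arithmetic resurgence with Gevrey mixed type $(0,s)$ local data (Definition \ref{eq.aquasigerm}), precisely because the quantum series are not expected to be $G$-functions in general --- indeed the $S^3$ computation in Section \ref{sub.S3} produces fractional polylogarithms $\Li_{2k+3/2}$, which are not holonomic over $\overline{\BQ}(z)$ and hence not $G$-functions. Your route (i)--(ii) would, if it worked, prove a stronger statement that is almost certainly false already in the simplest cases.

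The singularity-matching step (iii) is the one piece that is both plausible and aligned with the paper's picture: the link between the Euler--Lagrange equations of the dilogarithmic potential of a state sum and the gluing equations of an ideal triangulation, giving critical values $-\CS_{\BC}(\rho)\bmod\BZ(2)$, is exactly the mechanism the paper gestures at through \cite{Ga3,Ga4,GZ,Ne}. But that step presupposes you already know the series is resurgent, which is where the argument currently breaks.
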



Conjecture \ref{conj.1} implies the following corollaries.

\begin{corollary}
\lbl{cor.11}
For every pair $(\calK,G)$, 
the power series $\Lnp_{\calK,G}(z)$ has analytic continuation as a multivalued
function on the complement $\BC\setminus\eLambda_{\calK,G}$ of the finite set 
$\eLambda_{\calK,G}$.
\end{corollary}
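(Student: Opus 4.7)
The plan is to deduce the corollary from Conjecture \ref{conj.1} by descending along the exponential covering map $p:\BC\to\BC^*$, $p(z)=e^{z/(2\pi i)}$, which has kernel $\BZ(2)=(2\pi i)^2\BZ$. By Definition \ref{def.singN}, the set $\Lambda_{\calK,G}$ is invariant under $\BZ(2)$-translation, and its quotient $\Lambda_{\calK,G}/\BZ(2)$ is the finite set $\{-\CS_{\BC}(\rho)\}_\rho$ arising from the compact moduli $X_{G_\BC}(\calK)$. Consequently $p$ carries $\Lambda_{\calK,G}$ onto the finite set $\eLambda_{\calK,G}\setminus\{0\}$, which establishes the finiteness of $\eLambda_{\calK,G}$.

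Next I invoke Conjecture \ref{conj.1}: the function $F(z):=\Lnp_{\calK,G}(e^{z/(2\pi i)})$ is arithmetic resurgent, hence resurgent by Definitions \ref{def.ares}(a) and \ref{def.res}, so $F$ extends as a multivalued holomorphic function on $\BC\setminus\La$ for some discrete $\La\subset\Lambda_{\calK,G}$. I then verify the periodicity $F(z+(2\pi i)^2)=F(z)$ on the full domain $\BC\setminus\La$. On the half-plane $\mathrm{Im}(z)<0$, which is exactly $|p(z)|<1$, the identity $F(z)=\sum_n Z_{\calK,G,n}\,p(z)^n$ holds by the original definition of $\Lnp_{\calK,G}$, and manifestly depends only on $p(z)$; hence $F$ is $(2\pi i)^2$-periodic there. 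Analytic continuation through $\BC\setminus\La$ (itself $(2\pi i)^2$-invariant by the $\BZ(2)$-invariance of $\Lambda_{\calK,G}$) propagates this periodicity globally.

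With periodicity in hand, $F$ descends through the covering $p:\BC\setminus\Lambda_{\calK,G}\to\BC^*\setminus(\eLambda_{\calK,G}\setminus\{0\})$ to a multivalued function $\tilde L$ on $\BC\setminus\eLambda_{\calK,G}$ (using $0\in\eLambda_{\calK,G}$). By construction $\tilde L(w)=\Lnp_{\calK,G}(w)$ for $|w|<1$, so $\tilde L$ is the sought multivalued analytic continuation of $\Lnp_{\calK,G}$ on the complement of the finite set $\eLambda_{\calK,G}$.

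The main obstacle I anticipate is the second step: propagating the periodicity $F(z+(2\pi i)^2)=F(z)$ through arbitrary analytic continuation paths requires that the monodromies of $F$ around points of $\La$ match under $(2\pi i)^2$-translation. This is plausible from the $\BZ(2)$-invariance of the singular set and the fact that translation by $(2\pi i)^2$ is a deck transformation of the domain, but demands careful branch-tracking; once secured, the descent through $p$ is formal.
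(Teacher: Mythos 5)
Your argument is the natural one and, as far as I can tell, matches the route the paper implicitly intends (the paper states Corollary~\ref{cor.11} without proof, so there is no written argument to compare against word for word). The three steps — finiteness of $\eLambda_{\calK,G}$ from the $\BZ(2)$-periodicity of $\Lambda_{\calK,G}$ in Definition~\ref{def.singN} and the finiteness of the image of $\CS_{\BC}$ on the compact $X_{G_\BC}$, analytic continuation of $F(z)=\Lnp_{\calK,G}(e^{z/(2\pi i)})$ from Conjecture~\ref{conj.1} together with Definitions~\ref{def.ares}(a) and~\ref{def.res}(b), and descent along the covering $p(z)=e^{z/(2\pi i)}$ — are exactly what is needed.

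The ``main obstacle'' you flag at the end is not actually a gap; it is closed by a two-line application of the identity theorem. Set $T(z)=z+(2\pi i)^2$. Since $\Lambda_{\calK,G}$ is $\BZ(2)$-invariant, $T$ is a biholomorphism of $\BC\setminus\Lambda_{\calK,G}$. The pullback $T^*F=F\circ T$ is therefore again a multivalued analytic function on $\BC\setminus\Lambda_{\calK,G}$, and on the half-plane $\mathrm{Im}(z)<0$ (which is connected, open, and $T$-invariant, and where $F$ is single-valued) one has $T^*F=F$ directly from the series. Analytic continuation commutes with the biholomorphism $T$, so the germs of $F$ and $T^*F$ at any basepoint in the half-plane have identical continuations along every path in $\BC\setminus\Lambda_{\calK,G}$; equivalently, continuing $F$ along $\tilde\gamma$ and along $T\circ\tilde\gamma$ gives the same value. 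That is precisely the compatibility-of-lifts statement you need to define $\tilde L$ on $\BC^*\setminus(\eLambda_{\calK,G}\setminus\{0\})=\BC\setminus\eLambda_{\calK,G}$, and since $\tilde L(w)=\Lnp_{\calK,G}(w)$ for $|w|<1$ by construction, this is the asserted continuation. No branch-tracking beyond this is required. So your proof is complete once this observation is inserted.
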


\begin{corollary}
\lbl{cor.12}
If $M$ is a closed hyperbolic 3-manifold, then the Witten-Reshetikhin-Turaev
invariants determine the Volume of $M$ by:r
\begin{equation}
\lbl{eq.RT}
e^{-\Vol(M)/(2 \pi)}=\min\{|\l| \,\, | \,\, \Lnp_{M,\SU(2)}(z) 
\,\, \text{is singular at}\,\, z=\l \neq 0 \}. 
\end{equation}
This follows from the fact that $\Lnp_{M,\SU(2)}(z)$ has a singularity
at 
$$
e^{-(\CS_{\BC}(\rho)+\BZ(2))/(2 \pi i)}=
e^{-\Vol(\rho)/(2 \pi) + i \theta_{\rho}}
$$
and $\Vol(\rho) \leq \Vol(\rho_M)=\Vol(M)$ where $\rho_M$ is a discrete 
faithful representation.
\end{corollary}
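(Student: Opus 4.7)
The plan is to combine Conjecture \ref{conj.1} applied to $(M,\SU(2))$ with the splitting $\CS_{\BC}(\rho) = i\Vol(\rho) + \CS(\rho)$ of \eqref{eq.CCS} and the classical theorem (Thurston--Gromov, extended by Dunfield--Francaviglia) that $\Vol(\rho) \leq \Vol(M)$ for every $\rho \in X_{\SL(2,\BC)}(M)$, with equality at the discrete faithful representation $\rho_M$.

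First I would unpack Conjecture \ref{conj.1}: $\Lnp_{M,\SU(2)}(e^{w/(2\pi i)})$ is arithmetic resurgent with singularity set contained in $\Lambda_{M,\SU(2)} = \bigcup_\rho (-\CS_{\BC}(\rho) + \BZ(2))$ and, by the hyperbolicity clause, containing $\Lambda^{\mathrm{geom}}_{M,\SU(2)}$ as honest (not merely possible) singularities. Pulling back along the exponential substitution $z = e^{w/(2\pi i)}$ translates these singularities to the points $\lambda_\rho = \exp(-\CS_{\BC}(\rho)/(2\pi i)) \in \BC^*$, and inserting \eqref{eq.CCS} while using that $\CS(\rho) \in \BR$ yields
\begin{equation*}
|\lambda_\rho| \;=\; \left|\exp\!\left(-\frac{i\Vol(\rho)+\CS(\rho)}{2\pi i}\right)\right| \;=\; \exp\!\left(-\frac{\Vol(\rho)}{2\pi}\right).
\end{equation*}
Consequently minimising $|\lambda|$ over nonzero singularities of $\Lnp_{M,\SU(2)}(z)$ is equivalent to maximising $\Vol(\rho)$ over the flat $\SL(2,\BC)$-connections that contribute an actual singularity. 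By the volume-maximality theorem, $\Vol(\rho) \leq \Vol(M) = \Vol(\rho_M)$, so the minimum is at least $e^{-\Vol(M)/(2\pi)}$; since $\rho_M$ contributes an honest singularity at $\lambda_{\rho_M}$ with $|\lambda_{\rho_M}| = e^{-\Vol(M)/(2\pi)}$ (again by the hyperbolicity clause), the minimum is attained, yielding \eqref{eq.RT}.

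The main obstacle is precisely the noncancellation step. Formally, $\Lambda_{M,\SU(2)}$ is only an \emph{upper bound} on the singularity set predicted by Conjecture \ref{conj.1}, and a priori a candidate $\lambda_\rho$ could fail to be a singularity if the corresponding critical-point contribution is extinguished by interference with others. The hyperbolicity clause of Conjecture \ref{conj.1} asserts that no such cancellation afflicts the geometric representation, and establishing this (beyond the numerical evidence in Section \ref{sec.evidence}) is the genuinely deep ingredient --- all the remaining analytic bookkeeping above is routine in comparison.
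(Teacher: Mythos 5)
Your proof is correct and follows essentially the same route as the paper: apply Conjecture \ref{conj.1} to locate the singularities, use the decomposition $\CS_{\BC}(\rho)=i\Vol(\rho)+\CS(\rho)$ to compute $|\lambda_\rho|=e^{-\Vol(\rho)/(2\pi)}$, invoke volume-maximality $\Vol(\rho)\leq\Vol(\rho_M)=\Vol(M)$, and rely on the hyperbolicity clause of the conjecture to ensure the geometric singularity is actually present. Your explicit flag of the noncancellation issue --- that $\Lambda_{M,\SU(2)}$ is only an upper bound for the singularity set and the hyperbolicity clause is what guarantees the minimum is attained --- is a useful articulation of a point the paper leaves implicit.
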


\begin{corollary}
\lbl{cor.13}
Witten's conjecture (formulated in \cite{Wi})
regarding the asymptotic expansion of the Witten-Reshetikhin-Turaev 
invariants holds.
\end{corollary}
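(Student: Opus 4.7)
The plan is to derive Witten's asymptotic expansion as a direct consequence of the arithmetic resurgence of $\Lnp_{M,G}(w)$, via Cauchy's formula and a Hankel-type contour deformation around each singularity in $\eLambda_{M,G}$. Starting from
\begin{equation*}
Z_{M,G,n}=\frac{1}{2\pi i}\oint_{|w|=r}\frac{\Lnp_{M,G}(w)}{w^{n+1}}\,dw,\qquad 0<r<1,
\end{equation*}
and using Corollary \ref{cor.11} to continue the integrand multivaluedly onto $\BC\setminus\eLambda_{M,G}$, I would push the contour outward and collect, sheet by sheet, the obstruction to single-valuedness around each $\lambda\in\eLambda_{M,G}$.

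Near each $\lambda\neq 0$ the arithmetic resurgence hypothesis gives a finite Nilsson-class expansion of the form \eqref{eq.nilsson}, whose pieces $(w-\lambda)^{\alpha}\log^{\beta}(w-\lambda)\,h_{\alpha,\beta}(w-\lambda)$ have arithmetic-Gevrey coefficients. A standard Hankel/Borel--Laplace evaluation of each local contribution yields an asymptotic series of the form $\lambda^{-n}\,n^{-\alpha-1}(\log n)^{\beta}\sum_{k\geq 0}c_{k,\alpha,\beta,\lambda}\,n^{-k}$, which is exactly the branch-by-branch content of Proposition \ref{prop.expar}; I would invoke that proposition directly rather than redo the contour calculation. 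Since $\lambda=\exp(-\CS_{\BC}(\rho)/(2\pi i))$, one has $\lambda^{-n}=\exp(n\CS(\rho)/(2\pi i))\exp(n\Vol(\rho)/(2\pi))$, which at the compact flat connections (where $\Vol(\rho)=0$ and $|\lambda|=1$) reproduces Witten's semiclassical phases.

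Matching Witten's formula term-by-term then requires identifying the local exponents and leading coefficients at each unit-circle singularity $\lambda_{\rho}$ with the stationary-phase data at $\rho$: the square root of the Reidemeister torsion, together with an eighth root of unity from the spectral flow, and a power of $n$ recording $(\dim H^{0}-\dim H^{1})/2$ of the adjoint cohomology of $\rho$. For this identification I would use Conjecture \ref{conj.2} (\emph{Exact Implies Perturbative}) in its alien-derivative form \eqref{eq.delta1}, $\Delta_{1}\hat\Lnp_{M,G}(z)=\Lp_{M,G}(\log(1+z))$, together with the analogous statement at each non-trivial $\lambda_{\rho}$; the right-hand sides are the LMO-type perturbative series, whose leading terms equal the Reidemeister torsion by the results recalled in Section \ref{sub.pinvariants}.

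The main obstacle is the contribution of singularities strictly inside the unit disk. These produce terms $\lambda^{-n}$ that are formally exponentially growing, yet the unitarity bound $|Z_{M,G,n}|<Cn^{m}$ forces them to cancel in the full sum. Ruling out or cancelling such contributions demands a Stokes-type analysis showing that hidden singularities are accessible only after analytic continuation through unit-circle singularities and hence enter the coefficient expansion with a compensating Stokes multiplier. This upgrade of Proposition \ref{prop.expar} to a full trans-series description of $\Lnp_{M,G}$, with explicit Stokes constants between sheets, is the decisive analytic step, and it is precisely why the arithmetic resurgence of Conjecture \ref{conj.1} -- rather than mere analytic continuation -- is needed to extract the full Witten conjecture.
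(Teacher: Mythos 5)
Your Cauchy-plus-Hankel-contour argument matches the paper's own route: in Section~\ref{sub.analyticCS}, feature~(b) of Conjecture~\ref{conj.0} states explicitly that the Witten Conjecture follows ``via elementary complex analysis'' by writing $Z_{M,G,n}$ as a contour integral of $\Lnp_{M,G}(z)/z^{n+1}$ and deforming the contour to the singularities nearest the origin, with the details deferred to~\cite[Sec.7]{CG1}. Invoking Proposition~\ref{prop.expar} to package the local Nilsson-class data into the trans-series is also what the paper intends. The part of your proposal about identifying the leading coefficients at each $\lambda_\rho$ with Reidemeister torsion and spectral flow, via the alien-derivative form \eqref{eq.delta1} of Conjecture~\ref{conj.2}, goes genuinely beyond what the paper records for this corollary; the paper simply asserts the corollary, and that matching is indeed the extra work one would need to recover Witten's formula in full.

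Your ``main obstacle'' paragraph, however, rests on a misreading of how the contour deformation meets the inner singularities. Because $\Lnp_{M,G}(z)$ is \emph{defined} by a power series convergent in $|z|<1$ (the unitarity bound, Section~\ref{sub.npinvariants}), the principal branch is holomorphic in the open unit disk; any points of $\eLambda_{M,G}$ with $|\lambda|<1$ (which correspond to irreducible $\SL(2,\BC)$ flat connections with positive volume) necessarily live on other sheets, reached only after monodromy around a unit-circle singularity. Pushing the Cauchy contour outward from $|w|=r<1$ therefore never crosses them: the saddle/Hankel contributions come exclusively from the singularities on $|w|=1$, i.e.\ from the $\SU(2)$ flat connections, exactly as Witten's conjecture predicts. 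There is nothing to ``cancel'' and no compensating Stokes multiplier is required at the level of the leading asymptotics; the hidden singularities influence only the resurgence of the \emph{subleading} $1/n$ series at each $\lambda_\rho$, not the selection of dominant exponentials. This is precisely the ``paradox'' already flagged after Conjecture~\ref{conj.0}, and it resolves itself without the extra Stokes analysis you posit as essential.
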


\begin{corollary}
\lbl{cor.14}
For every hyperbolic knot $K$ in 3-space, the Kashaev invariants determine
the Volume of $K$ by:
\begin{equation}
\lbl{eq.VC}
e^{-\Vol(K)/(2 \pi)}=\min\{|\l| \,\, | \,\, \Lnp_{K,\SU(2)}(z) \,\, 
\text{is singular at}\,\, z=\l \neq 0 \}. 
\end{equation}
Moreover, there is an asymptotic expansion of the Kashaev invariants in
powers of $1/n$ using Proposition \ref{prop.expar}.
\end{corollary}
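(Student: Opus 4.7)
The proof plan is to apply Conjecture~\ref{conj.1} to the pair $(K,\SU(2))$, read off the singularity pattern of $\Lnp_{K,\SU(2)}(w)$ in the $w$-plane, identify the non-zero singularity nearest the origin via the hyperbolic volume maximality theorem, and then invoke Proposition~\ref{prop.expar} to extract the asymptotic expansion of the Kashaev invariants. The essential ingredients are (i) Conjecture~\ref{conj.1}, which is our standing hypothesis; (ii) the identification $\CS_{\BC}(\rho) = i\Vol(\rho) + \CS(\rho)$ of~\eqref{eq.CCS}; and (iii) the Gromov--Thurston--Dunfield rigidity theorem bounding the volume of any parabolic $\SL(2,\BC)$-representation by the hyperbolic volume of $K$.

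First, by Conjecture~\ref{conj.1}, $F(z) := \Lnp_{K,\SU(2)}(e^{z/(2\pi i)})$ is an arithmetic resurgent series in $z$ whose singularities lie in $\Lambda_{K,\SU(2)}$ and include all of $\Lambda^{\mathrm{geom}}_{K,\SU(2)}$. Since the substitution $z \mapsto w = e^{z/(2\pi i)}$ is a local biholomorphism away from $w=0$, the multivalued continuation of $\Lnp_{K,\SU(2)}(w)$ inherits singularities in $\eLambda_{K,\SU(2)}$ together with quasi-unipotent local monodromy at each. For each parabolic representation $\rho$, the critical value $\CS_\BC(\rho) = i\Vol(\rho) + \CS(\rho)$ produces a singularity at $w_\rho = e^{-\CS_\BC(\rho)/(2\pi i)}$; using $\CS(\rho) \in \BR/\BZ$ one computes $|w_\rho| = e^{-\Vol(\rho)/(2\pi)}$.

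Next, by the rigidity theorem of Gromov--Thurston extended by Dunfield and Francaviglia to parabolic $\SL(2,\BC)$-representations of hyperbolic knot complements, $\Vol(\rho) \leq \Vol(K)$ for every $\rho \in X^{\mathrm{par}}_{\SL(2,\BC)}(K)$, with equality attained at the discrete faithful representation $\rho_K$ (and its complex conjugate). Hence every non-zero $\lambda \in \eLambda_{K,\SU(2)}$ satisfies $|\lambda| \geq e^{-\Vol(K)/(2\pi)}$, and equality is achieved at the singularity associated to $\rho_K \in \Lambda^{\mathrm{geom}}_{K,\SU(2)}$, which is present by the hyperbolic clause of Conjecture~\ref{conj.1}. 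This yields the Volume formula. For the asymptotic expansion, apply Proposition~\ref{prop.expar} (in the $w$-variable) to $\Lnp_{K,\SU(2)}(w)$: the nearest non-zero singularities are the two complex conjugate points $\lambda_{\pm}$ of absolute value $e^{-\Vol(K)/(2\pi)}$, and each contributes a term $\lambda_{\pm}^{-n} f_{\lambda_{\pm}}(1/n)$ where $f_{\lambda_{\pm}}$ is a Nilsson-type series whose coefficients are Gevrey of mixed type.

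The main obstacle is the transfer of the arithmetic resurgence property from the $z$-coordinate (where Conjecture~\ref{conj.1} is formulated and singularities are genuine periods in $\calP$) to the $w$-coordinate (where singularities become exponentials of periods, typically not periods themselves), so Definition~\ref{def.ares} does not literally apply to $\Lnp_{K,\SU(2)}(w)$. Either a version of Proposition~\ref{prop.expar} formulated for such \emph{exponentiated} arithmetic resurgent functions must be stated and proved, or the asymptotic expansion must be derived directly by Cauchy contour deformation: integrating $\Lnp_{K,\SU(2)}(w)/w^{n+1}$ around a contour pushed outward past $\lambda_{\pm}$ and computing the Hankel-type contribution from the local quasi-unipotent expansion at each nearest singularity. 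A secondary technical point is invoking the Dunfield--Francaviglia volume maximality result, which is external to the paper and must be imported as a black box.
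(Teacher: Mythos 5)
Your argument is correct and is essentially the same reduction the paper makes: Conjecture~\ref{conj.1} supplies singularities including $\Lambda^{\mathrm{geom}}_{K,\SU(2)}$, the identification $\CS_\BC(\rho)=i\Vol(\rho)+\CS(\rho)$ with real $\CS(\rho)$ gives $|e^{-\CS_\BC(\rho)/(2\pi i)}|=e^{-\Vol(\rho)/(2\pi)}$, volume rigidity pins the minimum at the discrete faithful representation, and Proposition~\ref{prop.expar} gives the expansion of the coefficients (this is exactly the argument the paper spells out inside Corollary~\ref{cor.12} and carries over verbatim to the knot case). The ``obstacle'' you flag --- that $\Lnp_{K,\SU(2)}(w)$ in the $w$-variable has singularities at exponentials of periods rather than periods, so Definition~\ref{def.ares} does not literally apply --- is real but is already acknowledged in the text: the remark preceding Proposition~\ref{prop.expar} states that the transseries conclusion (which is all Corollary~\ref{cor.14} uses for the asymptotic-expansion clause) follows from plain resurgence of $G(z)$ alone, without the arithmetic refinement, so no new ``exponentiated'' version of the proposition is required for the claim as stated. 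One small inaccuracy: the nearest nonzero singularities need not always be a pair of complex conjugates --- for amphicheiral knots such as $4_1$ one has $\CS(\rho_K)=0$ and the two coincide on the positive real axis, as Example~\ref{ex.41} shows.
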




\section{The Habiro ring, and P versus NP}
\lbl{sec.habiro}

In this section we we describe an arithmetic relation, due to Habiro, 
between the perturbative $\Lp_{\calK,G}(z)$ and the non-perturbative 
$\Lnp_{\calK,G}(z)$ invariants of knotted objects.
This section is independent of our conjecture. However, Habiro's results 

\begin{itemize}
\item[(a)]
are a good complement of our conjecture,
\item[(b)]
are important and interesting on their own right,
\item[(c)]
point to a different arithmetic origin for the invariants of knotted
objects. This point of view has been studied by Gukov-Zagier \cite{GZa}.
\end{itemize}

In this section, a knotted object $\calK$ denotes either 
a homology sphere $M$ or a knot $K$. For simplicity, we will assume that
$G=\SU(2)$. A theorem of Habiro implies that 
$\Lnp_{\calK,\SU(2)}(z)$ determines $\Lp_{\calK,\SU(2)}(z)$ and vice-versa;
\cite{Ha1,Ha2}. Let us explain more about Habiro's key results.
In \cite{Ha2} Habiro introduces the ring

\begin{equation}
\lbl{eq.habiro}
\widehat{\BZ[q^{\pm}]}=\lim_{\leftarrow n} \BZ[q^{\pm}]/((q)_n)
\end{equation}
where $(q)_n$ is the quantum $n$-factorial defined by:

\begin{equation}
\lbl{eq.qfactorial}
(q)_n=\prod_{k=1}^n (1-q^k)
\end{equation}
with $(q)_0=1$. In a sense, one may think of elements of the Habiro ring
as complex-valued {\em analytic functions} with domain $\Omega$, the set
of complex roots of unity. This way of thinking is motivated by the following
features of the Habiro ring, shown in \cite{Ha1,Ha2}:

\begin{itemize}
\item[(a)]
It is easy to see that every element $f(q) \in \widehat{\BZ[q^{\pm}]}$ can be
written (nonuniquely) in the form:
\begin{equation}
\lbl{eq.h2}
f(q)=\sum_{n=0}^\infty f_n(q) (q)_n, \qquad  
 f_n(q) \in
\BZ[q^{\pm}], \quad \text{for} \quad n \in \BN.
\end{equation}
Note however that the above form is not unique, since for example:
$$
1=\sum_{n=0}^\infty q^{n+1} (q)_n.
$$
Nevertheless the form \eqref{eq.h2} can be used to generate easily elements
of the Habiro ring.
\item[(b)] Elements of the Habiro ring can be evaluated at complex roots of
unity. In other words, there is a map:
\begin{equation}
\lbl{eq.habironp2}
\widehat{\BZ[q^{\pm}]} \longto \BC^{\Om}, \qquad
f(q) \longto \left( f: \Omega \longto \BC, \qquad \w \mapsto f(\w)\right).
\end{equation}
In particular, we can associate a map:
\begin{equation}
\lbl{eq.habironp}
\widehat{\BZ[q^{\pm}]} \longto \BC[[z]], \qquad
f(q) \longto \Lnp_f(z)=1+\sum_{n=1}^\infty f(e^{2 \pi i/n}) z^n.
\end{equation}
\item[(c)] Elements of the Habiro ring have Taylor series expansions around
$q=1$ (and also around every complex root of unity). In other words, we can
define a map:
\begin{equation}
\lbl{eq.habirop}
\widehat{\BZ[q^{\pm}]} \longto \BQ[[z]], \qquad
f(q) \longto \Lp_f(z)=\calB( f(e^{1/x})).
\end{equation}
\item[(d)] As in the case of analytic functions, the maps \eqref{eq.habironp}
and \eqref{eq.habirop} are 1-1. Thus, $\Lnp_f(z)$ determines $\Lp_f(z)$ and 
vice-versa. However, we need all the coefficients of the power series 
$\Lnp_f(z)$ to determine a single (eg. the third) coefficient of $\Lp_f(z)$.
\item[(e)] Given a
homology sphere $M$, there exists an element $f_{M,\SU(2)}(q) \in
\widehat{\BZ[q^{\pm}]}$ such that its image under the maps \eqref{eq.habironp}
and \eqref{eq.habirop} coincide with the non-perturbative and perturbative
invariants of $M$ discussed in Section \ref{sub.npinvariants} and 
\ref{sub.pinvariants}. This was a main motivation for Habiro, and 
was extended to knots in 3-space by Huynh-Le in \cite{HL}. 
\end{itemize}

One may ask for an extension of Conjecture \ref{conj.1} 
for the series $\Lnp_f(z)$ and $\Lp_f(z)$ that come from the Habiro ring. 
Unfortunately, the Habiro ring is uncountable (whereas all quantum invariants
of knotted objects lie in a countable subring) and it has little 
structure as such. Thus, it is unlikely that the series $\Lnp_f(z)$
associated to a random sequence of Laurent polynomials $(f_n(q))$ (as in 
\eqref{eq.h2}) will be resurgent. Concretely, we can pose the following
problem with overwhelming numerical evidence:

\begin{problem}
\lbl{prob.habiro}
Show that $\Lnp_f(z)$ is not resurgent when
\begin{equation}
\lbl{eq.frandom}
f(q)=\sum_{n=0}^\infty q^{2^n} (q)_n.
\end{equation}
\end{problem}

In \cite{GL4}, Le and the author introduced a countable subring
$\widehat{\BZ[q^{\pm}]}^{\mathrm{hol}}$ that consists of elements of the form:

\begin{equation}
\lbl{eq.habiroh}
f(q)=\sum_{n=0}^\infty f_n(q) (q)_n, \qquad  
 f_n(q) \in
\BZ[q^{\pm}], \quad (f_n(q)) \quad \text{is $q$-holonomic}
\end{equation}
where $q$-holonomic means that $(f_n(q))$ satisfies a linear $q$-difference
equation with coefficients in $\BQ[q^{\pm},q^{\pm n}]$; see \cite{WZ}.
In \cite{GL4} it was shown that the elements $f_{M,\SU(2)}(q)$ and 
$f_{K,\SU(2)}(q)$ of the Habiro ring actually lie in the countable subring 
$\widehat{\BZ[q^{\pm}]}^{\mathrm{hol}}$.

\begin{problem}
\lbl{prob.hab}
Show that for every $f \in  \widehat{\BZ[q^{\pm}]}^{\mathrm{hol}}$, the series
$\Lnp_f(z)$ and $\Lnp_f(z)$ are arithmetic resurgent.
\end{problem}

In the next section, we will discuss formulate a resurgence conjecture for
some special elements of the Habiro ring that do not always 
come from topology.

\section{Series of Sum-Product type}
\lbl{sec.SP}

Conjecture \ref{conj.1} and Problem \ref{prob.hab} ask for proving that 
certain power series are arithmetic resurgent. However, they do not
explain the source of resurgence. Usually, resurgence is associated with
a differential equation, linear or not; see for example 
\cite{Ec1} and also \cite{Co1,Sa}.

In this section we will give another construction of powers series
$\Lnp(z)$ and $\Lp(z)$ which aims to explain the origin of arithmetic
resurgence. This section was motivated by conversations with O. Costin
and J. \'Ecalle whom we thank for their generous sharing of their ideas.

Let us first introduce the notion of {\em series of Sum-Product type}.

\begin{definition}
\lbl{def.SPtype}
Consider function $F$ analytic in $[0,1]$ with $F(0)=0$ and the corresponding
sequence and series of sum-product type:
\begin{eqnarray}
\lbl{eq.SPan}
a_n &=& \sum_{k=1}^n \prod_{j=1}^k F\left(\frac{j}{n}\right) \\
\notag
&=& F\left(\frac{1}{n}\right)+
F\left(\frac{1}{n}\right)F\left(\frac{2}{n}\right)+\dots+
F\left(\frac{1}{n}\right)F\left(\frac{2}{n}\right)\dots
F\left(\frac{n}{n}\right)
\end{eqnarray}
and the corresponding series
\begin{equation}
\lbl{eq.LnpF}
\Lnp(z) = \sum_{n=1}^\infty a_n z^n \in \BC[[z]].
\end{equation}
Since $F(0)=0$, it follows that the formal power series

\begin{equation}
\lbl{eq.SP}
\Sigma\Pi(x):=\sum_{n=1}^\infty \prod_{j=1}^n F\left(\frac{j}{x}\right) \in 
\BC[[\frac{1}{x}]]
\end{equation}
is also well-defined. Let $\Lp(z)$ denote the Borel transform:
\begin{equation}
\lbl{eq.LpF}
\Lp(z) =\calB\left( \SP(x) \right) \in 
\BC[[z]].
\end{equation}
\end{definition}

Let us now give a flavor of some results from \cite{CG2} and \cite{ES}.
In the rest of the section, let us 
consider $F$ of the following trigonometric type:

\begin{equation}
\lbl{eq.trigF}
F(x)=\phi(e^{2 \pi i x})
\end{equation}
where

\begin{equation}
\lbl{eq.phitrig}
\phi(q)=\e q^{c\frac{n(n+1)}{2}} \prod_{r=1}^\infty (1-q^r)^{c_r}
\end{equation}
where $c \in \BZ$, $\e=\pm 1$, 
$c_r \in \BN$ for all $r$, and $c_r=0$ for all but finitely many $r$.
In \cite{Ga4} we construct elements of the extended Bloch group,
given by solutions of the algebraic equations:

\begin{equation}
\lbl{eq.phi=1}
\phi(q)=1 \qquad \text{or} \qquad \phi(q)=0.
\end{equation}
The values of these elements under the Rogers dilogarithm defines a set
$\Lambda \subset \BC$, and its exponentiated cousin 
$\eLambda=\exp(\Lambda/(2 \pi i))\cup\{0\}$.

\begin{theorem}
\lbl{thm.CG}\cite{CG2,ES}
$\Lnp(z)$ and $\Lp(z)$ are arithmetic resurgent with singularities included
in $\Lambda$.
\end{theorem}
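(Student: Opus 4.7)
The plan, following the strategy of \cite{CG2,ES}, is to realize both $\Lnp(z)$ and $\Lp(z)$ as outputs of a Borel--Laplace analysis of the sum--product expression, and then to identify the positions and local structure of the singularities with the dilogarithm elements produced in \cite{Ga4}. First I would rewrite $\prod_{j=1}^k F(j/n) = \exp(S_{n,k})$ with $S_{n,k} = \sum_{j=1}^k \log\phi(e^{2\pi i j/n})$ (branches fixed so that $S_{n,0}=0$) and apply Euler--Maclaurin to obtain
$$S_{n,k} = n\,V(k/n) + W(k/n) + \sum_{\ell \geq 1} n^{-\ell} R_\ell(k/n),$$
where $V(t) = \int_0^t \log\phi(e^{2\pi i s})\,ds$ is the action, and $W, R_\ell$ are explicit local correction terms depending polynomially on derivatives of $\log\phi$. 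Because the $c_r$ and $c$ in \eqref{eq.phitrig} are integers and $\phi$ is an algebraic function of $q$, all coefficients appearing here are algebraic in $q=e^{2\pi i x}$.

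Next I would perform a steepest-descent analysis on the sum over $k$, treated as an integral after Poisson summation. The relevant saddles satisfy $V'(x_*) = \log\phi(e^{2\pi i x_*}) = 0$, i.e.\ $\phi(q_*) = 1$, together with boundary saddles $\phi(q_*) = 0$; these are precisely the critical equations \eqref{eq.phi=1}. Since $\phi$ is a finite product of powers of $(1-q^r)$ times a monomial, direct integration by parts identifies each critical value $V(x_*)$ modulo $\BZ(2)$ with a $\BQ$-linear combination of Rogers dilogarithms $L(q_*^r)$, and hence places $V(x_*) \in \Lambda$ and $\exp(-V(x_*)/(2\pi i)) \in \eLambda$. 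This simultaneously locates the exponential growth rates of $a_n$ and the would-be singularities of $\Lnp$ in $\eLambda$.

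For $\Lp(z)$, the formal $1/x$-expansion $\SP(x)$ is obtained as the full asymptotic expansion of $\prod_{j=1}^n F(j/x)$ around $x=\infty$ at the trivial saddle $x_* = 0$; it is Gevrey-$1$ with coefficients in a number field, and its Borel transform $\Lp(z)$ is analytic near $0$. Alien calculus applied to the steepest-descent contour then produces, for each non-trivial saddle $x_*$, a singularity of $\Lp$ at $V(x_*) - V(0)\in\Lambda$, with local expansion in the Nilsson class \eqref{eq.nilsson} whose germ factors $h_{\alpha,\beta}$ are themselves Gevrey of mixed type (coming from the subleading terms $W,R_\ell$). For $\Lnp(z)$, I would substitute the steepest-descent expression for $a_n$ into $\sum a_n z^n$ and exchange sum and integral, expressing $\Lnp$ as a Laplace-type integral over the Borel sum of these local trans-series. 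The singularities in $z$ appear where $z$ coincides with an exponentiated saddle, i.e.\ at elements of $\eLambda$, and the local monodromy inherits the Picard--Lefschetz quasi-unipotent structure of the contour deformation.

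The arithmetic conclusions, i.e.\ parts (b)--(d) of Definition \ref{def.ares}, then follow because the saddles $q_*$ are algebraic, the critical values are $\BQ$-linear combinations of $\Li_2$ at algebraic points (hence periods), and the connection matrices among Lefschetz thimbles are defined over $\bar\BQ$. The main obstacle is Step~5: upgrading the formal/asymptotic picture to genuine endless continuation of $\Lnp$ on $\BC\setminus\eLambda$, with control over all Stokes transitions and no spurious singularities from coalescing saddles or from the interaction between the Poisson-resummed lattice sum and the steepest-descent deformation. This is precisely where the trigonometric product form \eqref{eq.phitrig} is indispensable: it yields the uniform Gevrey bounds on the remainders $R_\ell$ and the meromorphic continuation of the Borel kernel needed to close the argument of \cite{CG2,ES}.
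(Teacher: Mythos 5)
The paper itself does not contain a proof of Theorem \ref{thm.CG}; the statement is cited to \cite{CG2} and \cite{ES}, both unpublished or forthcoming at the time of writing, so there is no ``paper's own proof'' against which your argument can be matched line by line. The closest the paper comes is the Appendix, which gives an explicitly \emph{formal} derivation of the weaker relation \eqref{eq.asF2}: it expands $a_n$ in inverse powers of $n$, resums $\sum_n (c_k/n^k)z^n$ into classical polylogarithms $\Li_k(z)$, and reads off the local expansion at $z=1$ from the known Nilsson-class behavior of $\Li_k$. That computation is confined to the trivial saddle and to the singularity $z=1$, makes no attempt at a Borel-plane or steepest-descent analysis, and does not claim (or establish) resurgence.

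Your sketch is a different and more ambitious route (Euler--Maclaurin on $\log\prod F(j/n)$, Poisson summation, steepest descent, identification of saddles with \eqref{eq.phi=1}, passage to Rogers dilogarithm values as in Section \ref{sec.SP} and \cite{Ga4}, then alien calculus and Picard--Lefschetz for local monodromy). The individual ingredients are the right ones and the identification of $V(x_*)$ with elements of $\Lambda$ is consistent with the discussion surrounding Definition \ref{def.singN}. But, as you yourself flag, what you have written is a plan, not a proof: the passage from the transseries produced by the saddle analysis to rigorous \emph{endless} analytic continuation of $\Lnp$ on $\BC\setminus\eLambda$ with controlled Stokes transitions is precisely the technical core of \cite{CG2,ES} and is missing. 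Two further points deserve attention before the argument could close. First, the interchange of sum and integral in passing from the asymptotics of $a_n$ to the analytic continuation of $\sum a_n z^n$ is not justified and is delicate here, since $\eLambda$ may contain points inside the unit disk even though the series converges there (this is emphasized right after Conjecture \ref{conj.0}). Second, the arithmetic part of the claim, namely that the germ factors $h_{\alpha,\beta}$ in \eqref{eq.nilsson} are Gevrey of mixed type as required by Definition \ref{def.ares}(c), needs uniform mixed-Gevrey estimates on the Euler--Maclaurin remainders $R_\ell$; you assert this follows from the product form \eqref{eq.phitrig} but do not derive it.
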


\begin{remark}
\lbl{rem.SP1}
Notice that $\Lnp(z)=\Lnp_f(z)$ and $\Lp(z)=\Lp_f(z)$ where
$$
f(q)=\sum_{n=0}^\infty \e^n q^{cn} \prod_{r=1}^\infty (q^r)^{c_r}_n
$$
is an element of the countable Habiro subring 
$\widehat{\BZ[q^{\pm}]}^{\mathrm{hol}}$, where
$$
(q^r)_n=\prod_{k=1}^n (1-q^{kr}).
$$
Thus, Theorem \ref{thm.CG} is a special case of Problem 
\ref{prob.habiro}.
\end{remark}

\begin{remark}
\lbl{rem.SP2}
Equations \eqref{eq.phi=1} appear in the {\em dilogarithm ladders} of 
Lewin and others, whose aim is to produce interesting elements of
algebraic $K$-theory. For a detailed discussion, see \cite{Le}
and \cite{Za2}.
\end{remark}

\begin{remark}
\lbl{rem.SP3}
For the simplest knot $3_1$ and the simplest hyperbolic knot $4_1$, we have:
$$
\Lnp_{3_1,\SU(2)}(z)=\Lnp_{f_{3_1}}(z), \qquad 
\Lnp_{4_1,\SU(2)}(z)=\Lnp_{f_{4_1}}(z)
$$
(and likewise, equality for the $\Lp$-series), where
\begin{eqnarray*}
f_{3_1}(q) &=& \sum_{n=0}^\infty (q)_n \\
f_{4_1}(q) &=& \sum_{n=0}^\infty (-1)^n q^{-\frac{n(n+1)}{2}} (q)^2_n
\end{eqnarray*}
are both covered by Theorem \ref{thm.CG}.
\end{remark}

\begin{remark}
\lbl{rem.SP4}
The resurgence conclusion of 
Theorem \ref{thm.CG} is valid for very general entire functions $F$,
with some mild hypothesis. For a detailed discussion, see \cite{ES} and 
\cite{CG2}.
\end{remark}

\section{Evidence}
\lbl{sec.evidence}

\subsection{Some results}
\lbl{sub.results}

Let us summarize what is known about Conjecture \ref{conj.1}.
Conjecture \ref{conj.1} is known 

\begin{itemize}
\item[(a)]
for all 3-manifolds $M$ of the form
$\S \times S^1$ where $\S$ is a closed surface and all compact groups $G$. 
Indeed, this follows from
the fact that $Z_{M,G,n}$ is a polynomial in $n$. Thus, $\Lnp_{M,G}(z)$
is a rational function of $z$ with denominator a power of $1-z$. On the
other hand, $\eLambda_{M,G}=\{0,1\}$.
\item[(b)]
for $\Lp_{M,\SU(2)}$ where $M$ is the {\em Poincare homology sphere}, or
small Seifert fibered 3-manifolds, see \cite{CG1}. In this lucky case, one
uses explicit formulas for the coefficients of $\Lp_{M,\SU(2)}(z)$ given 
Zagier (see \cite{Za1}) which allow one show 
resurgence relatively easily. 
\item[(c)]
for the simplest knot $3_1$ (and also for
$(2,p)$ torus knots); and for the simplest hyperbolic knot $4_1$;
see \cite{CG1} and \cite{CG2}. See Remark \ref{rem.SP3}.
\end{itemize}

Our sample calculations below show the importance of the fractional 
polylogarithms and their analytic continuation, studied in detail in 
\cite{CG3}.

\subsection{Conjecture \ref{conj.1} for $S^3$}
\lbl{sub.S3}

Let us confirm Conjecture \ref{conj.1} for $S^3$. For simplicity, we will
choose $G=\SU(2)$. The case of other Lie groups is similar.
The Witten-Reshetikhin-Turaev invariant is given by \cite[Eqn.2.26]{Wi}:

\begin{equation}
\lbl{eq.WRTS3a}
Z_{S^3,\SU(2),n}=\sqrt{\frac{2}{n+2}} \sin\left(\frac{\pi}{n+2}\right).
\end{equation}
Expanding the above as a convergent power series in $1/n$:

$$
Z_{S^3,\SU(2),n}= \sqrt{2} \sum_{k=0}^\infty \frac{\pi^{2k+1} (-1)^k}{(2k+1)!}
\frac{1}{(n+2)^{2k+3/2}}
$$
and using the {\em fractional polylogarithm} $L_{\a}(z)$ defined for 
$\a \in \BC$ and $|z|<1$ by the convergent series:

\begin{equation}
\lbl{eq.fracpoly}
\Li_{\a}(z)=\sum_{n=1}^\infty \frac{z^n}{n^{\a}}
\end{equation}
it follows that

\begin{equation}
\lbl{eq.WRTS3b}
\Lnp_{S^3,\SU(2)}(z)=\frac{\sqrt{2}}{z^2} 
\sum_{k=0}^\infty \frac{\pi^{2k+1} (-1)^k}{(2k+1)!} 
\left( \Li_{2k+3/2}(z)-\zeta(2k+3/2)\right).
\end{equation}
Since $\Li_{\a}(z)$ has analytic continuation as a multivalued function
in $\BC\setminus\{0,1\}$ (see \cite{CG3}), 
it follows that $\Lnp_{S^3,\SU(2)}(z)$ has 
analytic continuation on $\BC\setminus\{0,1\}$. We can further compute the
monodromy around $z=0$ and $z=1$ using \cite{CG3}.

\subsection{Conjecture \ref{conj.1} for $S^1\times\Sigma_g$}
\lbl{sub.SS3}

Let us confirm Conjecture \ref{conj.1} for 3-manifolds of the form
$S^1 \times \Sigma_g$. For simplicity, we will
choose $G=\SU(2)$. The case of other Lie groups is similar.
The Witten-Reshetikhin-Turaev invariant is given by the famous {\em Verlinde
formula} \cite{Wi,Sz}:

\begin{equation}
\lbl{eq.WRTSSa}
Z_{S^1 \times \S_g,\SU(2),n}=\sum_{j=1}^{n+1} \left(
\frac{n+2}{2 \sin^2\frac{\pi j}{n+2}}\right)^{g-1}
\end{equation}
Altough not a priori obvious, it it true that the right hand side of the above
expression is a polynomial in $n$ of degree $3g-3$. In fact, we have 
\cite[Sec.3]{Sz}:

\begin{equation}
\lbl{eq.WRTSSb}
Z_{S^1 \times \Sigma_g,\SU(2),n}=-(2n+2)^{g-1} \text{Res}\left( 
\frac{(n+2)\cot((n+2)x)}{(2\sin x)^{2g-2}},x=0\right)
\end{equation}
when $g \geq 2$. For example, we have,


\begin{eqnarray*}
Z_{S^1 \times \Sigma_0,\SU(2),n} &=& 1 \\
Z_{S^1 \times \Sigma_1,\SU(2),n} &=& n+1 \\
Z_{S^1 \times \Sigma_2,\SU(2),n} &=& \frac{n^3}{6}+n^2+\frac{11n}{6}+1
\end{eqnarray*}
It follows that $\Lnp_{S^1 \times \Sigma_g,\SU(2)}(z)$ is a rational 
function with
denominator a power of $z-1$. For example, we have:

\begin{eqnarray*}
\Lnp_{S^1 \times \Sigma_0,\SU(2)}(z) &=& \frac{1}{1-z} \\
\Lnp_{S^1 \times \Sigma_1,\SU(2)}(z) &=& \frac{1}{(1-z)^2} \\
\Lnp_{S^1 \times \Sigma_2,\SU(2)}(z) &=& \frac{1}{(1-z)^4}
\end{eqnarray*}
Since $X_G(S^1 \times \Sigma_g)$ is connected for all $g$ and $G$, it 
follows
that $\eLambda_{S^1 \times \Sigma_g,G}=\{0,1\}$ confirming Conjecture 
\ref{conj.1}.

\subsection{Conjecture \ref{conj.1} for $3_1$}
\lbl{sub.conj131}

Let us give some details about how the work of \cite{Za1} and \cite{CG1}
verify Conjecture \ref{conj.1} for the simplest $3_1$ knot. An independent
verification of the Conjecture, valid for series of sum-product type,
can be obtained by \cite{ES}.

Equation (36) of \cite{Za1} and \cite{CG1} imply that we can write:

$$
Z_{3_1,\SU(2),n}=\zeta_{24}^{-n+3} n^{3/2} + 1 + \int_0^\infty e^{-np}
G(p) dp
$$
where $\zeta_c=e^{2 \pi i c}$ and 
$G(z)$ is a multivalued analytic function (analytic at $z=0$):
\begin{equation}
\lbl{eq.borel31}
G(z)=\frac{3\pi}{2 \sqrt{2}} 
\sum_{n=1}^\infty \frac{\chi(n) n}{(-z + n^2 \pi^2/6)^{5/2}}
\end{equation}
where
$\chi(\cdot)$ denotes the {\em unique primitive character of conductor} $12$:
\begin{equation}
\lbl{eq.chi}
\chi(n)=
\begin{cases}
1 & \text{if} \,\, n \equiv 1,11 \bmod 12 \\
-1 & \text{if} \,\, n \equiv 5,7 \bmod 12 \\
0 & \text{otherwise}
\end{cases}
\end{equation}

Together with Proposition \ref{prop.expar}, this implies that the singularities
of $\Lnp_{3_1,\SU(2)}(z)$ are $\{0,1,e^{\pi i/12}\}$. Moreover, the local 
expansion
of $\Lnp_{3_1,\SU(2)}(z)$ around $z=e^{\pi i/12}$ is given by:

\begin{eqnarray*}
\Lnp_{3_1,\SU(2)}(z) &=& \sum_{n=1}^\infty \zeta_{24}^{-n+3} n^{3/2} z^n 
+h(z)\\
&=& \zeta_8 \,\, \Li_{-3/2}(\zeta_{-24} z) +h(z)
\end{eqnarray*}
where $h(z)$ is a function holomorphic at $z=0$. Since $\Li_{-3/2}(z)$
is multivalued analytic at $\BC\setminus\{0,1\}$ (see \cite{CG3}), 
this confirms Conjecture \ref{conj.1} for $3_1$. Using a Mittag-Leffler
type decomposition for the fractional polylogarithm from \cite[Eqn.13]{CG3},
we can also verify the Symmetry Conjecture for $3_1$.

\subsection{Numerical evidence for the nearest singularity}
\lbl{sub.numerical1}

Conjecture \ref{conj.1} gives an exact formula for the singularity
of $\Lp_{K,\SU(2)}(z)$ which is nearest to the origin. Notice that this 
singularity does not in general coincide with the critical value of 
$\CS_{\BC}$ 
corresponding to the discrete faithful representation. This was indeed observed
numerically  for several {\em twist knots}.   
Let $K_p$ denote the twist knot with negative clasp and $p$ full twists, where
$p \in \BZ$:

$$ 
\psdraw{twist}{1.3in} 
$$
In particular, we have:
\begin{equation}
\lbl{eq.twistex}
K_1=3_1, \quad K_2=5_2, \quad K_3=7_2, \quad K_4=9_2, \quad 
K_{-1}=4_1, \quad K_{-2}=6_1, \quad K_{-3}=8_1, \quad K_{-4}=10_1.
\end{equation}
The invariant trace field of $K_p$ is of type $[1,p-1]$ for $p>1$ and 
$[0,|p|]$ for $p<0$. It follows that $\Lambda_{K_p,\SU(2)}$ is a subset of
a union of $2(p-1)$ (resp. $2|p|$) horizontal lines, symmetric with respect
to the $z$-axis, and a superposition of $2$ (resp. $1$) copies of the 
$z$-axis for $p>1$ (resp. $p<0$). This set can be computed exactly and 
numerically using the methods of \cite{GZ}.

The corresponding element of the Habiro ring is given by:

\begin{equation}
\lbl{eq.phitwist}
f_{K_p,\SU(2)}(q)=\sum_{n=0}^\infty C_{K_p,n}(q) 
(q)_n (q^{-1})_n
\end{equation}
where $C_{K_p,n}(q) \in \BZ[q^{\pm}]$ denote the $n$-cyclotomic polynomial of 
$K_p$. The latter may be computed inductively with respect to $n$ for each
fixed $p$, see \cite{GS}. 

Using this formula, one can compute 500 coefficients of $\Lp_{K_p,\SU(2)}(z)=
\calB(f_{K_p,\SU(2)}(e^{1/x}))$, and then numerically compute the singularity 
of the series  $\Lp_{K_p,\SU(2)}(z)$ nearest to the origin. The numerical
method used was the following:

\begin{itemize}
\item
Fix a truncated power series
$$
L(z)=\sum_{n=0}^N a_n z^n
$$
where $N$ is a sufficiently large integer (eg $N=500$).
\item  
Using the root test, one can compute approximately the radius of convergence
$r_0$ of the series $L(z)$
\item
Plot $|L(r \exp(2 \pi i t)|$ for $r$ near the inverse of the
radius of convergence. The plot reveals a blow-up at certain values $t_0$ of
$t$. 
\item
This suggests singularities at $r_0 e^{2 \pi i t_0}$, and an asymptotic
expansion of $a_n$ with a term of the form: 
$$
r_0^{-n} e^{-2 \pi i t_0 n} n^{\alpha} \left(c_0 + c_1 \frac{1}{n} + \dots
\right)
$$
In general, we have a finite sum (over $t_0$) of terms of the above form.
\item
One can numerically compute the constants $\alpha$ and $c_0$ by fitting
data. A fitting method (also used by Zagier in \cite[p.953]{Za1}) can
improve the rate of convergence to $O(1/n^d)$ for any $d$. We used $d=100$.
\end{itemize}

This was done for the twist knots of Equation \eqref{eq.twistex}. If 
$s(K_p)$ denote the
inverse of the radius of convergence of the series $\Lp_{K_p,\SU(2)}(z)$ then
we obtain numerically that:
$$
s(K_1)=\frac{\pi^2}{6}=1.644\dots \qquad s(K_2)=1.119\dots \quad 
s(K_3)=0.882\dots \quad s(K_4)=0.745\dots \quad s(K_5)= 0.745\dots
$$
These numbers agree with the absolute value of 
$e^{\CS_{\BC}(\rho)/(2 \pi i)}$ for $\rho$ some Galois conjugate of the 
discrete faithful representation. 
We thank N. Dunfield, S. Shumakovitch and C. Zickert for their help in
the numerical computations.

Additional numerical evidence for $K_1$ and $K_{-1}$ (and for many series of 
1-dimensional sum-product type) was obtained by \cite{ES}.

\subsection{Acknowledgement}
An early version of these conjectures was delivered by lectures in Columbia
University, CUNY, Universit\'e Paris VII, and Orsay in the fall of 2006 and
in Aarhus, Baton-Rouge, Hanoi and Strasbourg in the summer of 2007. 
The author wishes to thank the organizers of the conferences for their 
hospitality, and also N. A' Campo, Y. Andr\'e, N. Dunfield,
M. Kontsevich, W. Neumann, T. Pantev, D. Sullivan, D. Thurston, 
D. Zeilberger and C. Zickert. Above all, the author expresses his 
gratitude to O. Costin, J. \'Ecalle and D. Zagier for many enlightening 
conversations.



\appendix

\section{A formal relation among $\Lnp(z)$ and $\Lp(z)$ for series of
sum-product type}

In this section we will give a formal proof of the relation among 
$\Lnp(z)$ and $\Lp(z)$ for series of sum-product type. We will use the
notation from Section \ref{sec.SP}.

Suppose that $F(x)<1$ for all $x \in (0,1]$. Then, it is easy to see that
$\Lnp(z)$ is convergent inside the unit disk $|z|<1$. The next theorem
describes an explicit relation between $\Lnp(z)$ and $\Lp(z)$.

\begin{theorem}
\lbl{thm.asF}
We have:
\begin{equation}
\lbl{eq.asF2}
\Lnp(1+z)=\log(z) \Lp(\log(1+z))+h(z)
\end{equation}
where $h(z)$ is analytic at $z=0$.
\end{theorem}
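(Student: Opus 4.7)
The plan is to establish that $a_n$ admits the formal series $\Sigma\Pi(n)$ as its full asymptotic expansion in powers of $1/n$, and then to translate this into the behaviour of $\Lnp(z)=\sum_n a_n z^n$ near $z=1$ via the classical Jonqui\`ere singular expansion of $\Li_m$.

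Write $\Sigma\Pi(x)=\sum_{m\ge 1}\beta_m/x^m$ formally. Since $F(0)=0$ and $F$ is analytic on $[0,1]$, for each fixed $k$ the product $\prod_{j=1}^k F(j/n)$ expands as a convergent power series in $1/n$ starting at order $1/n^k$, say $\sum_{m\ge k}\alpha_{k,m}/n^m$, with $\beta_m=\sum_{k=1}^m \alpha_{k,m}$. The hypothesis $F(x)<1$ on $(0,1]$ together with $F(0)=0$ yields by compactness a uniform bound $|F(x)|\le C<1$ on $[0,1]$, so the tail satisfies $\sum_{k=M+1}^n \prod_{j=1}^k F(j/n)=O(1/n^{M+1})$. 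Combining, $a_n=\sum_{m=1}^M \beta_m/n^m+O(1/n^{M+1})$ for every $M$; that is, $a_n\sim\Sigma\Pi(n)$ to all orders in $1/n$.

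Next I would use the Jonqui\`ere expansion $\Li_m(z)=-(-w)^{m-1}/(m-1)!\,\log w+\phi_m(w)$ with $w=-\log z$ and $\phi_m$ entire; summing against $\beta_m$ and using $\Lp(t)=\sum_m \beta_m t^{m-1}/(m-1)!$ gives, formally, $\sum_m \beta_m \Li_m(z)=-\Lp(-w)\log w+(\text{analytic in }w)$. To make this rigorous I would introduce the Watson-type integral
$$I(z):=\int_0^\delta \Lp(t)\,\frac{ze^{-t}}{1-ze^{-t}}\,dt$$
for $\delta$ smaller than the (positive) radius of convergence of $\Lp$. The substitution $u=t-\log z$, combined with the decomposition $1/(e^s-1)=1/s+\tilde g(s)$ with $\tilde g$ entire, exhibits $I(z)=-\Lp(\log z)\log(-\log z)+h_1(z)$ with $h_1$ analytic at $z=1$, while Watson's lemma identifies the Taylor coefficients $I_n$ of $I(z)$ as having asymptotic expansion $\sum_m \beta_m/n^m$, so $a_n-I_n=O(1/n^\infty)$.

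Substituting $z=1+z_0$ and using $-\log(1+z_0)=-z_0(1-z_0/2+\dots)$, one has $\log(-\log(1+z_0))=\log(z_0)+(\text{const})+(\text{analytic in }z_0)$; absorbing the constant and analytic correction (times $\Lp(\log(1+z_0))$) into $h(z_0)$, and adjusting the branch of $\log$, yields the asserted identity $\Lnp(1+z)=\log(z)\Lp(\log(1+z))+h(z)$. The main obstacle is upgrading the polynomial estimate $a_n-I_n=O(1/n^\infty)$ to analyticity of $\sum_n (a_n-I_n)z^n$ across $z=1$; this should follow from a refined Watson remainder (exploiting the cut-off at $t=\delta$) combined with the exponential tail bound from step one, giving $a_n-I_n=O(e^{-cn})$, which is precisely where the hypothesis $F<1$ plays its essential role.
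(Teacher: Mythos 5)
Your proposal takes a genuinely different route from the paper. The paper's proof is, by its own admission, purely formal: it writes $a_n=\sum_{k=1}^N c_k/n^k+O(1/n^{N+1})$, drops the error terms, interchanges the double sum to obtain $\sum_k c_k\Li_k(z)$, applies the singular expansion of $\Li_k$ at $z=1$ (the Oesterl\'e form; your Jonqui\`ere formula is the same expansion), and then lets $N\to\infty$ with no justification of any of these steps. You instead package the formal sum into the single Laplace-type integral $I(z)=\int_0^\delta \Lp(t)\,ze^{-t}/(1-ze^{-t})\,dt$ and extract the logarithmic singularity directly by the substitution $u=t-\log z$ and the splitting $1/(e^{u}-1)=1/u+\tilde g(u)$ with $\tilde g$ entire. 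This is a cleaner and more honest scheme: it avoids the term-by-term interchanges, makes the role of the convergence radius of $\Lp$ visible through the cut-off $\delta$, and shows clearly where $F(0)=0$ and $|F|\le C<1$ on $[0,1]$ are used (to get the finite-$k$ truncation error in $a_n$ and the uniform control of the expansion of $\prod_{j\le k}F(j/n)$).

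There is, however, a genuine gap at the last step, which you flag but do not fill, and which cannot be waved away. Watson's lemma gives $a_n-I_n=O(n^{-M})$ for every fixed $M$, with $M$-dependent constants; this is not enough to conclude that $\sum_n(a_n-I_n)z^n$ is analytic through $z=1$. You need $a_n-I_n=O(\rho^n)$ for some $\rho<1$, i.e.\ that $a_n$ agrees with the Borel--Laplace resummation $\int_0^\delta\Lp(t)e^{-nt}\,dt$ of $\SP$ up to an exponentially small correction. This is a Gevrey-$1$/resurgence statement about the sequence $a_n$ itself; it does \emph{not} follow from Watson's lemma, from the bound $|F|\le C<1$, or from a ``refined Watson remainder'' alone, since the latter controls the truncation error of one asymptotic expansion, not the gap between two distinct quantities merely sharing the same asymptotic expansion. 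Establishing the exponential closeness of $a_n$ to its Borel sum is exactly the analytic content the paper defers to the resurgence results it cites; without it, your argument, like the paper's, establishes only the formal identity.
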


\begin{proof}
We will give only the formal calculation, leaving the analytic details
to the reader. Below, $h(z)$ will denote a germ of an analytic function
at $z=0$. For $n \in \BN$, let $c_n$
denote the coefficient of $1/x^n$ in $\SP(x)$ given in \eqref{eq.SP}. 
Let us fix $N \in \BN$ and consider $n$ large enough. Then, we have:

\begin{eqnarray*}
a_n &=& \sum_{k=1}^N \prod_{j=1}^k F\left(\frac{j}{n}\right) 
+O\left(\frac{1}{n^{N+1}}\right) \\
&=& \sum_{k=1}^N \frac{c_k}{n^k}  +O\left(\frac{1}{n^{N+1}}\right).
\end{eqnarray*}
Thus,

\begin{eqnarray*}
\Lnp(z) &=& \sum_{n=1}^\infty a_n z^n \\
&=& \sum_{n=1}^\infty \left( 
\sum_{k=1}^{N} \frac{c_k}{n^k}  +O\left(\frac{1}{n^{N+1}}\right)\right)z^n.
\end{eqnarray*}
Ignore the $O(\cdot)$ terms, and interchange summation and integration.
We obtain that

\begin{eqnarray*}
\sum_{n=1}^\infty \sum_{k=1}^N \frac{c_k}{n^k} z^n
&=& \sum_{k=1}^N c_k \sum_{n=1}^\infty  \frac{z^n}{n^k} \\
&=& \sum_{k=1}^N c_k \Li_k(z),
\end{eqnarray*}
where
$$
\Li_k(z)=\sum_{n=1}^\infty\frac{z^n}{n^k}
$$
is the $k$-polylogarithm. The latter is a multivalued analytic function
on $\BC\setminus\{0,1\}$ with an asymptotic expansion at $z=1$ of the form:
$$
\Li_k(z)=\log(z-1)\frac{\log(z)^{k-1}}{(k-1)!}+h(z)
$$
where $h(z)$ is an analytic function at $z=0$; 
see for example, \cite[Eqn.6]{Oe}. Thus,

\begin{eqnarray*}
\sum_{n=1}^\infty \sum_{k=1}^N \frac{c_k}{n^k} z^n
&=& \log(z-1) \sum_{k=1}^N c_k \frac{\log^{k-1}(z)}{(k-1)!} + h(z).
\end{eqnarray*}
Letting $N \to \infty$, and replacing $z$ by $z+1$ it follows that:

\begin{eqnarray*}
\Lnp(1+z) &=& \log(z) 
\sum_{k=1}^\infty c_k \frac{\log^{k-1}(1+z)}{(k-1)!} + h(z) \\
&=& \log(z) \Lp(\log(1+z)) + h(z).
\end{eqnarray*}
This concludes the formal calculation.
\end{proof}

\section{A path integral formula for $\Lnp(z)$}
\lbl{sec.pathintegral}

In this section we will give a path integral formula for $\Lnp_{M,G}(z)$
using as input the famous Chern-Simons path integral studied in the
seminal paper of Witten; see \cite{Wi}. With the notation of \cite{Wi}
and with our normalization we have:

$$
Z_{M,G,n}=\int_{\calA} e^{\frac{n}{2 \pi i} \CS(A)}
\mathcal{D} A 
$$
where $\calA$ is the affine space $\calA$ of $G$-connections
on the trivial bundle $M \times G$ over $M$. Since $\CS$ takes values in
$\BC/\BZ(2)$, we require that the level $n$ (which plays the role of the
inverse Planck's constant) be integer. Without loss of generality, we assume
that $n \in \BN$. 
Formally separating the $n=0$ contribution in \eqref{eq.Fz} and 
interchanging summation and integration in \eqref{eq.Fz}, it follows
that

\begin{eqnarray*}
\lbl{eq.LnpCS}
\Lnp_{M,G}(z) &=& 1+ \int_{\calA} \sum_{n=1}^\infty  
e^{\frac{n}{2 \pi i} \CS(A)} z^n dA \\
&=& 1+
z \int_{\calA} \frac{1}{e^{-\frac{n}{2 \pi i} \CS(A)}-z} \mathcal{D} A.
\end{eqnarray*}
The above formula is an infinite dimensional analogue of a 
{\em Riemann-Hilbert problem}, and was obtained during a conversation with 
Kontsevich in the fall of 2006. For a detailed discussion on the 
Riemann-Hilbert problem, see \cite{Df}. Finite dimensional analogues of
the Riemann-Hilbert problem are discussed in \cite{CG2}.

\ifx\undefined\bysame
        \newcommand{\bysame}{\leavevmode\hbox
to3em{\hrulefill}\,}
\fi

\end{document}